\numberwithin{equation}{section}
\theoremstyle{plain}
\newtheorem*{theorem*}{Theorem}
\newtheorem*{lemma*}{Lemma}
\newtheorem{theorem}{Theorem}
\newtheorem{lemma}{Lemma}[section]
\newtheorem{corollary}[lemma]{Corollary}
\newtheorem{proposition}[lemma]{Proposition}
\newenvironment{customthm}[1]
  {\innercustomthm}
  {\endinnercustomthm}
\theoremstyle{definition}
\newtheorem{definition}[lemma]{Definition}
\newtheorem{remark}[lemma]{Remark}
\newtheorem{example}[lemma]{Example}
\def\l{\lambda}
\def\tr{\operatorname{tr}}
\def\T{{\bf T}}
\def\F{{^F}\!}
\def\K{{^K}\!}
\def\sym{\mathrm{sym}}
\begin{document}
	
\title{A Limit Theorem for Stochastically Decaying Partitions at the Edge}
\author{In-Jee Jeong\textsuperscript{1} and Sasha Sodin\textsuperscript{2}}
\footnotetext[1]{Department of Mathematics, Princeton University,
Fine Hall, Washington Road, Princeton, NJ~08544-1000 USA.
E-mail: ijeong@math.princeton.edu.}
\footnotetext[2]{School of Mathematical
Sciences, Tel Aviv University, Tel Aviv, 69978, Israel and School of Mathematical 
Sciences, Queen Mary University of London, London E1~4NS, United Kingdom. E-mail:
sashas{\MVOne}@post.tau.ac.il. Supported in part by the European
Research Council start-up grant 639305 (SPECTRUM).}
\date{\today}

\maketitle


\section{Introduction}

In this paper, we study the asymptotic behavior of the first, second, and so on rows of stochastically decaying partitions. We establish that, with appropriate scaling in time and length, the sequence of rows converges to the Airy$_2$ line ensemble. 

This result was first established, in a more general setting, by Borodin and 
Olshanski \cite{BO}, who relied on the determinantal	 structure of the Poissonized correlation functions. Our argument is based on a different, combinatorial approach, developed by Okounkov \cite{Ok}. This approach may be useful in other problems in which no 
determinantal structure is available, and also highlights the similarity between
random partitions and random matrices.

\subsection{Partitions, Plancherel measures, and stochastic dynamical systems}

\subsubsection{Plancherel measures}

Let $G$ be a finite group, and let $\operatorname{Irrep}(G)$ be the set of
isomorphism classes of irreducible representations. Every class function
$f: G \to \mathbb{C}$ can be represented as a linear combination of 
the characters $\chi_\lambda$ corresponding to $\lambda \in \operatorname{Irrep}(G)$:
\[ f(g) = \sum_{\lambda \in \operatorname{Irrep}(G)} \widehat{f}(\lambda)
\chi_\lambda(g) \frac{\dim \lambda}{|G|}~, \quad
\widehat{f}(\lambda) = \sum_{g \in G} f(g) \frac{\overline{\chi_\lambda(g)}}{\dim \lambda}~.\]
The Plancherel measure $\mathbb{P}_G $ on $\operatorname{Irrep}(G)$ is defined by
$\mathbb{P}_G (\lambda) =\dim^2 \lambda /  |G|$. The name is justified by
the Plancherel equality
\[ \sum_{\lambda \in \operatorname{Irrep}(G)} |\widehat{f}(\lambda)|^2 \, \mathbb{P}_G(\lambda) = \sum_{g \in G} |f(g)|^2~.\]

\subsubsection{Partitions}

Let $S_n$ be the symmetric group. The irreducible representations of $S_n$
are indexed by partitions $\lambda$ of $n$ (denoted: $\lambda \vdash n$). These
are non-increasing sequences of non-negative integers $\lambda_1 \geq \lambda_2 \geq \lambda_3 \geq \cdots \geq 0$ such that $\sum \lambda_j = n$. We denote the Plancherel
measure on the symmetric group by $\mathbb{P}_n = \mathbb{P}_{S_n}$,  and the expectations under this measure by $\mathbb{E}_n$.

Given $\l \vdash n$, the size of $\lambda$ is $|\lambda| = n$, and the length of $\lambda$ (denoted $l(\lambda)$) is  the largest 
index $j$ such that $\lambda_j \geq 1$.  We visualize $\l$ as a Young diagram, i.e. the union of boxes with coordinates $(i,j)$, $1\le i \le l(\l)$ and $1 \le j \le \l_i$ (see Figure~\ref{fig:young}, where the $i$-axis is directed downwards and the $j$-axis is directed rightwards). The content of a box is defined by $\operatorname{ct}(\square = (i,j)) = j-i$. The conjugate partition $\lambda'$ is defined by
\[ \lambda'_j = \left| \left\{ i \, \mid \, \lambda_i \geq j \right\}\right|~;\]
the corresponding Young diagram is obtained by reflection about the $i=j$ axis.

\begin{figure}
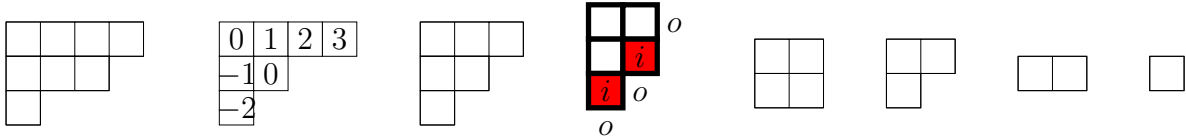

\newcommand\ylw{\Yfillcolour{yellow}}
\newcommand\wht{\Yfillcolour{white}}
\newcommand\red{\Yfillcolour{red}}
\[ \yng(4,3,1) \qquad 
\young(0123,<-1>0,<-2>)
\qquad \yng(3,2,1) \qquad 
\Ylinethick{2pt}
\gyoung(;;:o,;!\red<i>,;i:o,:o)
\Ylinethick{.3pt}
\qquad \yng(2,2)
\qquad \yng(2,1) \qquad \yng(2) \qquad \yng(1) \]
\caption{The Young diagrams corresponding to the  partitions 
$(4,3,1) \vdash 8$, $(4,2,1) \vdash 7$, $(3,2,1)\vdash 6$, $(2,2,1)\vdash5$, $(2,2)\vdash4$, $(2,1)\vdash3$, $(2)\vdash2$, $(1)\vdash1$. The numbers on the second diagram are the contents of its boxes. 
Each diagram is obtained from the previous one by removing a corner. The inner corners of the fourth diagram are marked with an {\em i} and colored in red, whereas the outer corners are marked with an~{\em o}.
}\label{fig:young}
\end{figure}

We say that $(i,\l_i)$ is an inner corner (or simply a corner) if $\l_i > \l_{i+1}$, and that
$(i, \l_i+1)$ is an outer corner if $\lambda_{i-1} > \lambda_i$ or $i = 1$. If we remove a corner box $\square_i = (i, \l_i)$ from $\l$, we get a partition of $n-1$ which we denote by $\l - \square_i$. 
If a partition $\mu \vdash n'$ is obtained from $\l \vdash n$ by consecutively removing some corners (or equivalently $\l$ is obtained from $\mu$ by consecutively adding outer corners) then we write $\mu \le \l$. Equivalently, $\mu \leq \lambda$ if $\mu_j \leq \lambda_j$ for any $j$.

\medskip

 The Frobenius coordinates of a partition $\lambda$ are the numbers $f_1 > \cdots > f_d \geq 0$ and $f_1' > \cdots > f_d' \geq 0$ defined by
\begin{equation}\label{eq:deffr} f_j = \lambda_j - j \quad (\lambda_j \geq j)~, \quad
f_j' = \lambda'_j - j \quad (\lambda'_j \geq j)~,\end{equation}
where $\lambda'$ is the conjugate partition.  The Kerov interlacing
coordinates \cite{Kerov} of a partition $\lambda$ are the numbers 
\[ \iota_1 > o_1 > \iota_2 > o_2 > \cdots > o_{c-1} > \iota_c~, \]
where $\iota_j$ and $o_j$ are the contents of the inner and outer corners of $\lambda$.

\begin{example} For the leftmost partition of Figure~\ref{fig:young},
\[ d = 2~, \, f_1 = 3~, \, f_2 = 1~, \, f_1' = 1~, \, f_2' = 0~, \]
and
\[ c = 4~,\, \iota_1 = 4~, \, o_1 = 3~, \, \iota_2 = 2~, \, o_2 =  1~, \, \iota_3 = -1, \,
o_3 = -2~, \, \iota_4 = -3~.\]
\end{example}

For more information about random partitions and the connections to other topics
such as the length of the longest increasing subsequence of a random permutation
we refer to the monograph of Romik \cite{Romik}.

\subsubsection{Plancherel decay}\label{sub:defdecay}  We now describe a stochastic system, as follows.
Let $n\geq 1$ be an integer. Define a sequence
of random partitions $(\Lambda^n(t) \vdash n - t)_{t = 0}^n$ as follows: $\Lambda^n(0) \vdash n$ is sampled from the Plancherel measure $\mathbb{P}_n$, and $\Lambda^n(t+1)$ is obtained
by erasing one of the corners from $\Lambda^n(t)$, so that 
\[ \mathbb{P}(\Lambda^n(t+1) = \lambda - \square_i \, \mid \, \Lambda^n(t) = \lambda) 
= \frac{\mathrm{dim}\,(\lambda - \square_i)}{\mathrm{dim}\,\l}~. \]
See Figure~\ref{fig:young} for a realization with $n = 8$.

The Plancherel measure is preserved by the dynamics. That is, for each $t \ge 0$, the distribution of $\Lambda^n(t)$ is given by $\mathbb{P}_{n - t}$; this is due to the  balance law
\begin{equation*}
\mathbb{P}_{n-t-1}(\mu) = \sum_{\nu \vdash n-t} \mathbb{P}_{n-t}(\nu) \mathbb{P}( \Lambda^n(t+1) = \mu  |  \Lambda^n(t) = \nu)
\end{equation*}
which is equivalent to the identity
\[ \dim \lambda = \sum_{\square_i} \dim(\lambda - \square_i)\]
expressing that the decomposition of $\lambda \vdash n-t$ into 
irreducible representations of $S_{n-t-1}$ contains no multiplicities.
 In particular, the distribution of $\Lambda^n(t)$ depends only on $n-t$. Note that this stochastic system is naturally time-reversible: given $n, t, t'$ with $n \ge t \ge t' \ge 0$, we have \begin{equation*}
\begin{split}
\mathbb{P}\left( \Lambda^n(t-t') = \mu | \Lambda^n(t) = \l  \right) = \mathbb{P}\left( \Lambda^{n+t'}(t+t') = \l | \Lambda^{n+t'}(t) = \mu \right)~,
\end{split}
\end{equation*} 
and the right hand side is well-defined for all $t' \ge 0$. Therefore, we may extend the (random) trajectory $\Lambda^n(t)$ to all $t < 0$; $\Lambda^n(t)$ has the distribution of $\mathbb{P}_{n+t}$. It follows from the induction rule that given any two partitions $\l \le \mu $ such that $\l \vdash n-t-1$ and $\mu \vdash n-t$, one has \begin{equation*}
\begin{split}
\mathbb{P}\left(   \Lambda^n(t-1) = \l    |  \Lambda^n(t) = \mu  \right) = \frac{\dim \mu}{(n-t)\dim \l}~.
\end{split}
\end{equation*}

\subsubsection{The Airy$_2$ line ensemble} The Airy$_2$ line ensemble 
is a stochastic process $(x_{j}(\tau))_{j \in \mathbb{Z}_{>0}, \tau \in \mathbb{R}}$ on $\mathbb{Z}_{> 0} \times \mathbb{R}$ such that
for any $\tau_1 < \cdots < \tau_k$ the collections of points $(x_{j}(\tau_1)), \cdots,
(x_{j}(\tau_k))$ form a determinantal process on $\mathbb{R}^k$ with kernel
\begin{equation}\label{eq:ak}
A(\tau'',u'';\tau',u') = \begin{cases}
\int_0^\infty e^{-u(\tau'' - \tau')} \mathrm{Ai}(u'' +u') \mathrm{Ai}(u'+u)\,du~, & \tau'' \ge \tau' \\
-\int_{-\infty}^0 e^{-u(\tau'' - \tau')} \mathrm{Ai}(u'' +u) \mathrm{Ai}(u'+u)\, du~, & \tau'' > \tau'
\end{cases}
\end{equation}
where $\mathrm{Ai}$ is the Airy function. We refer to  \cite{Bor_det} for a discussion
of determinantal  processes and to Figure~\ref{fig:airy} for an illustration.

The kernel (\ref{eq:ak})  first appeared in
the works of Mac\^edo \cite{Macedo} and Forrester, Nagao, and Honner \cite{FNH}.
The associated line ensemble was studied by Pr\"ahofer and Spohn \cite{PS},  Johansson \cite{Joh03}, Corwin and Hammond \cite{CH}; in particular, the existence of a
continuous modification was proved in these works. Numerous properties of the ensemble, particularly, invariance in distribution under a particular resampling, were proved in \cite{CH}, where also  the  term `Airy line ensemble'
was  coined. 

\begin{figure}
\includegraphics[trim={1.5cm 22.5cm  4cm 0},clip,scale=.5]{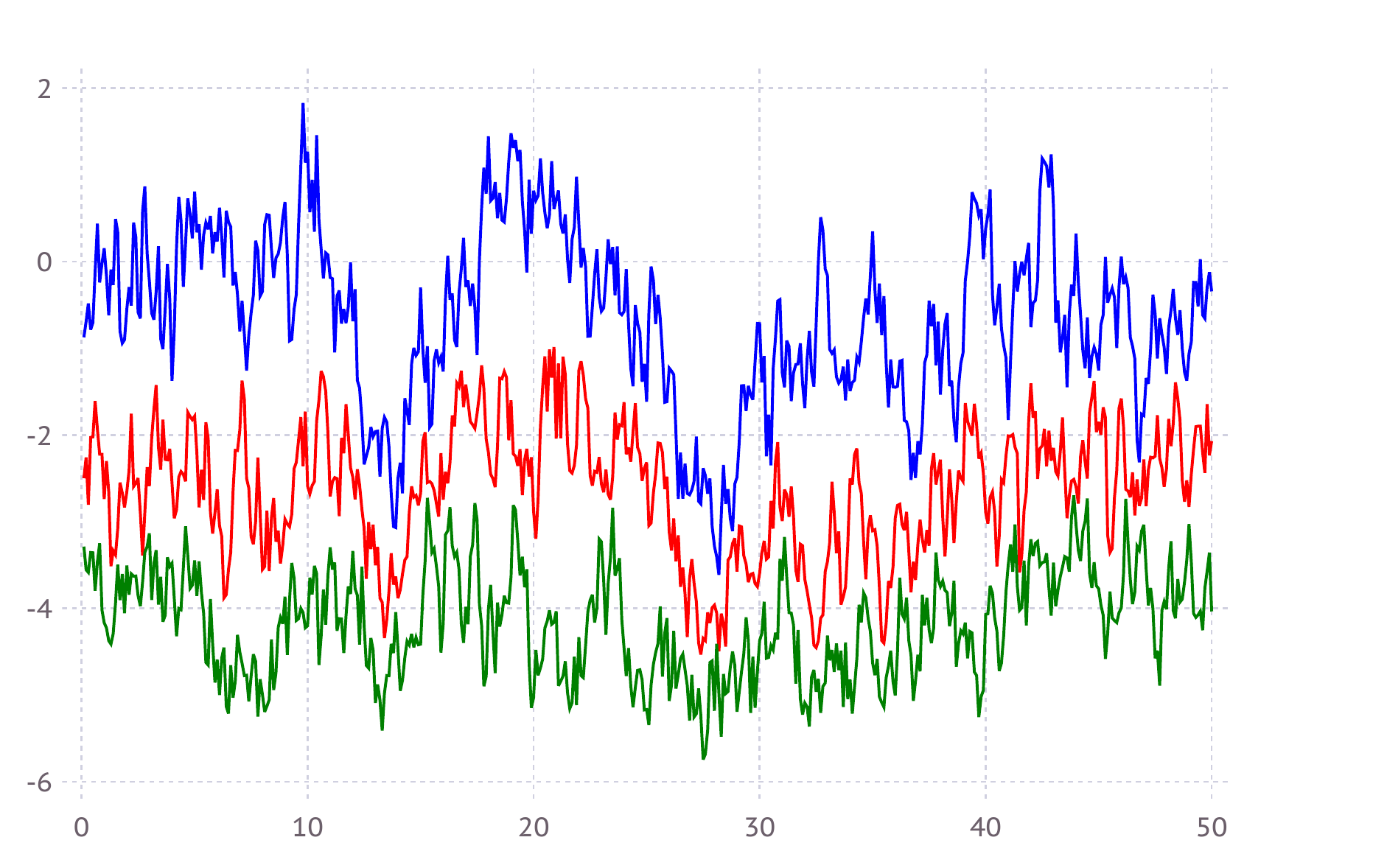}
	\centering
	\caption{The top three lines of the Airy$_2$ line ensemble (a realization). The top line fluctuates more than the next ones.}
\label{fig:airy}
\end{figure}

The joint distribution of the points $(x_j(\tau))$ for a fixed $\tau$ is described by
the Airy$_2$ point process of random matrix theory \cite{F,TW1}, 
which is a determinantal process with the Hermitian kernel
\begin{equation*}
A(u'';u') =
A(0,u'';0,u') =  \int_0^\infty \mathrm{Ai}(u'' +u) \mathrm{Ai}(u'+u)\,du~. \end{equation*}

\subsection{The main statement}

Consider the stochastic system $\Lambda^n(t)$ of \ref{sub:defdecay}. 
Introduce the rescaled variables 
\begin{equation}\label{eq:scaling}
x^n_j(\tau) = n^{-1/6} \left( \Lambda^n(t^n(\tau))_j - 2 (n - t^n(\tau))^{1/2} \right)~, \quad t^n(\tau) = 2\tau n^{5/6}~.
\end{equation} 
This defines $x^n_j(\tau)$ for $t^n(\tau) \in \mathbb{Z} \cap (-\infty,n]$. We then interpolate $x^n_j(\tau)$ as a piecewise linear function for $t^n(\tau) \le n$, and simply set $x^n_j(\tau) = x^n_j(n^{1/6}/2)$ for $\tau > n^{1/6}/2$. This gives for each $n \ge 1$, a random decreasing sequence of continuous functions \begin{equation*}
X^n(\tau) = ( x^n_1(\tau) \ge x^n_2(\tau) \ge \dotsb )~,\quad \tau \in \mathbb{R}~.
\end{equation*}

\begin{theorem}\label{thm:main}
	The sequence of  processes $X^n(\tau)$ converges to the Airy$_2$ line ensemble as $n \rightarrow \infty$, in the sense of finite-dimensional marginals.
\end{theorem}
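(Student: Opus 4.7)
The plan is to verify convergence of the multi-time correlation functions at the edge via a contour-integral / saddle-point analysis in the spirit of Okounkov~\cite{Ok}, and then to upgrade this to joint convergence of the top rows $x^n_j(\tau_i)$.

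Fix $\tau_1 < \cdots < \tau_k$ and set $t_i = t^n(\tau_i)$, $n_i = n - t_i$. Composing the Plancherel weight with the transition probabilities of~\ref{sub:defdecay} and telescoping the successive ratios $\dim \mu^{(i)} / \dim \mu^{(i-1)}$ gives the joint formula
\[
\mathbb{P}\bigl(\Lambda^n(t_i) = \mu^{(i)},\, i = 1,\ldots,k\bigr) = \frac{\dim \mu^{(1)} \, \dim \mu^{(k)}}{n_1!} \prod_{i=1}^{k-1} f^{\mu^{(i)}/\mu^{(i+1)}},
\]
valid on admissible nested sequences $\mu^{(k)} \leq \cdots \leq \mu^{(1)}$ with $\mu^{(i)} \vdash n_i$, where $f^{\mu/\nu}$ is the number of standard Young tableaux of skew shape $\mu/\nu$. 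The object of interest is the multi-time correlation function of the Frobenius-like coordinates,
\[
\rho^n_k\bigl((x_1,\tau_1),\ldots,(x_k,\tau_k)\bigr) = \mathbb{P}\bigl( x_i \in \{\Lambda^n(t_i)_j - j\}_{j \geq 1} \text{ for each } i\bigr).
\]
Since the Airy$_2$ line ensemble is determinantal, it suffices, by standard arguments for determinantal point processes, to show that after the edge rescaling \eqref{eq:scaling} the appropriately normalised $\rho^n_k$ converges pointwise (with enough uniformity) to $\det \bigl[A(\tau_i,\xi^{(i)};\tau_j,\xi^{(j)})\bigr]_{i,j=1}^k$.

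I would express the three combinatorial ingredients --- the end-point dimensions and the skew dimensions --- as contour integrals. The Frobenius-type formula for $\dim \mu$, summed against the indicator that a prescribed integer lies in $\{\mu_j - j\}$, yields, after the edge scaling, an integrand of the form $\exp(n_i S(u_i))$ times elementary factors, with $S$ the explicit action whose saddle at the upper edge drives the Airy limit in the single-time analysis of~\cite{Ok}. The skew dimensions $f^{\mu^{(i)}/\mu^{(i+1)}}$ admit an analogous Jacobi--Trudi / infinite-wedge determinantal representation which, after summation over the intermediate shapes $\mu^{(i)}$, factorises through the same integration variables and contributes cross-time factors depending on the time differences $t_{i+1} - t_i$. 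Assembling these pieces yields a $k$-fold contour integral representation of $\rho^n_k$.

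A coupled saddle-point analysis then identifies the limit: the local change of variable $u_i = 2 + n^{-1/3} v_i$ sends each one-slice integrand to the Airy integrand, while the cross-time factor, under the scaling $t_i = 2 \tau_i n^{5/6}$, rescales to the exponential weight $\exp(-v(\tau_{i+1} - \tau_i))$ that appears inside the Airy$_2$ kernel~\eqref{eq:ak}. The two branches of \eqref{eq:ak} according to the sign of $\tau'' - \tau'$ arise from the choice of integration contour forced by convergence of the cross-time factor at $|v| \to \infty$, and are consistent with the time-reversal symmetry of~\ref{sub:defdecay}. Convergence of all $k$-point correlation functions, together with uniform tail estimates on the top rows --- a by-product of the same saddle analysis in the single-time case --- upgrades to joint convergence of the finite-dimensional marginals of $X^n$.

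The main obstacle will be the skew-dimension factor $f^{\mu/\nu}$ summed over an exponentially large set of intermediate shapes: the single-time Okounkov analysis is by now a clean routine, while the cross-time exponential is where the genuinely new work lies. Concretely, one must identify a determinantal (Jacobi--Trudi or infinite-wedge) representation of $f^{\mu/\nu}$ that meshes with the integral representations at the end-points without spoiling the sharp saddle localisation, and then verify that only the leading contribution survives in the coupled limit. I expect that the natural home of the required identity is the infinite-wedge formalism, but that the final argument can be recast as a purely combinatorial manipulation of contour integrals.
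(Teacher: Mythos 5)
Your proposal takes the determinantal/contour-integral route, which is precisely the approach the paper sets out to \emph{avoid}: the introduction states that Borodin and Olshanski \cite{BO} proved this result via the determinantal structure of the Poissonized correlation functions, and that the present paper instead gives a combinatorial proof in the spirit of Okounkov \cite{Ok} via a (modified) moment method. The paper's argument reduces Theorem~\ref{thm:main} to Proposition~\ref{prop'}, an asymptotic statement for mixed traces of Jucys--Murphy elements, and then counts solutions of equations in $S_n$ using a diagram calculus parallel to that of Wigner-type random matrices; distributional convergence follows from convergence of Laplace transforms \`a la Soshnikov \cite{Sos}. So your proof is not the paper's proof, and the comparison is really with \cite{BO}, not with the present paper.

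Beyond the mismatch of method, there is a genuine gap in your plan. Your joint-law formula
\[
\mathbb{P}\bigl(\Lambda^n(t_i) = \mu^{(i)},\, i=1,\dots,k\bigr)
 = \frac{\dim \mu^{(1)}\,\dim\mu^{(k)}}{n_1!}\prod_{i=1}^{k-1} f^{\mu^{(i)}/\mu^{(i+1)}}
\]
is correct, but the decisive step --- summing out the ``rest'' of each $\mu^{(i)}$ and concluding that the $k$-point (and more generally the multi-point-per-time) correlation functions have a workable contour-integral form amenable to a coupled saddle analysis --- relies on the underlying measure being a Schur process, which is exactly what Poissonization buys and which the fixed-$n$ process is \emph{not}. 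Jacobi--Trudi / infinite-wedge determinantal representations of $f^{\mu/\nu}$ mesh with the Eynard--Mehta machinery only under a Schur-process structure; with the hard constraint $|\mu^{(i)}| = n_i$ in place, the sums over intermediate shapes do not factorize through the same integration variables as you hope. This is precisely the obstruction that forces Borodin--Olshanski to Poissonize and then de-Poissonize, and it is the main technical content of their paper. Treating it as an expected routine (``I expect that the natural home of the required identity is the infinite-wedge formalism'') leaves the core of the argument unproved. A secondary, smaller issue: convergence of the one-point-per-time functions $\rho^n_k\bigl((x_1,\tau_1),\dots,(x_k,\tau_k)\bigr)$ is not sufficient for convergence of finite-dimensional marginals of a point process; one needs all correlation functions (arbitrary numbers of points per slice) together with uniform tail bounds, which you mention only in passing.
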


The theorem is a special case of the (de-Poissonized) result of Borodin and Olshanski \cite{BO} ; their setting is described in \ref{sub:bo} below.

\smallskip

Specializing to the case $\tau = 0$, we recover the following result:

\begin{corollary}[Borodin-Okounkov-Olshanski \cite{BOO}, Johansson \cite{Jo}, Okounkov \cite{Ok}]\label{cor:BDJ}
	The sequence of point processes $\left( x_1^n \ge x_2^n \ge \dotsb \right)$ converges to the Airy$_2$ point process, in the sense of finite-dimensional marginals.
\end{corollary}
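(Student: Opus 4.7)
The corollary is essentially immediate from Theorem~\ref{thm:main}, of which it is the specialization to a single value of $\tau$, namely $\tau = 0$. My plan is to verify that this specialization reproduces exactly the statement of the corollary, so that the proof reduces to matching notation.

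First I would trace through the scaling~(\ref{eq:scaling}) at $\tau = 0$. Since $t^n(0) = 0$, the partition $\Lambda^n(0)$ is sampled from $\mathbb{P}_n$, and
\[ x_j^n(0) \;=\; n^{-1/6}\left( \Lambda^n(0)_j - 2 n^{1/2}\right) \;=\; n^{-1/6}\left( \lambda_j - 2\sqrt{n}\right),\qquad \lambda \sim \mathbb{P}_n, \]
which is exactly the random variable $x_j^n$ appearing in the statement of the corollary. Hence the finite-dimensional marginal convergence of $X^n$ asserted by Theorem~\ref{thm:main}, restricted to the single time $\tau = 0$, already gives the convergence of $(x_1^n \ge x_2^n \ge \cdots)$ to $(x_1(0) \ge x_2(0) \ge \cdots)$.

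Second I would invoke the identification of the one-time marginal of the Airy$_2$ line ensemble with the Airy$_2$ point process. This is explicitly recorded in the excerpt just after the definition of the ensemble: the correlation kernel~(\ref{eq:ak}) at $\tau'' = \tau' = 0$ reduces to the Hermitian Airy kernel
\[ A(u'';u') \;=\; \int_0^\infty \mathrm{Ai}(u''+u)\, \mathrm{Ai}(u'+u)\, du, \]
and the joint law of a determinantal process at a fixed time is determined by this kernel. Combining the two steps proves the corollary.

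There is no genuine obstacle here; the real content sits in Theorem~\ref{thm:main}, and the corollary simply extracts a special case. The only point I would double-check carefully is the matching of conventions, in particular that the $x_j^n$ appearing in the corollary (written without an explicit $\tau$) truly denotes $x_j^n(0)$ in the sense of~(\ref{eq:scaling}), since edge-scaling normalizations (the factor $2$, the exponent $1/6$, and the sign) vary between sources. If one wished to prove the corollary directly, the single-time version of Okounkov's contour-integral moment approach — on which the proof of the main theorem itself is modeled — suffices, but there is no reason to do so once Theorem~\ref{thm:main} is available.
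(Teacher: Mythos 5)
Your argument is exactly the paper's: the corollary is stated immediately after Theorem~\ref{thm:main} with the one-line justification ``Specializing to the case $\tau=0$,'' and your two checks (that $x_j^n(0)$ under~(\ref{eq:scaling}) is the edge-scaled Plancherel row, and that the fixed-$\tau$ marginal of the Airy$_2$ line ensemble is the Airy$_2$ point process, as recorded in the discussion following~(\ref{eq:ak})) are precisely what that remark tacitly relies on. Correct, same approach, just spelled out.
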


The convergence in distribution of $x_1^n$ and $x_2^n$ was first 
proved by Baik, Deift, and Johansson \cite{BDJ1,BDJ2}, who also conjectured
the full Corollary~\ref{cor:BDJ}.

\begin{remark} The scaling of the lengths of the partitions is natural in view of
the limit shape of $\lambda$, which was found by Logan--Shepp \cite{LS} and Vershik--Kerov \cite{VK1,VK2}, see further the book \cite{Romik} and also (\ref{eq:lsvk}) below.
\end{remark}

We also state a variant of Theorem~\ref{thm:main} in the Frobenius and Kerov coordinates. Let $f_j^n(t)$, and $f_j'^n(t)$ be the Frobenius coordinates of 
$\Lambda^n(t)$. Let $\iota_j^n(t)$ and $o_j^n(t)$ be the Kerov coordinates of $\Lambda^n(t)$, and let $\iota_j'^n(t)$ and $o_j'^n(t)$ be the Kerov coordinates of $\Lambda'^n(t)$.
Denote
\begin{equation}\label{eq:scalingF}
\begin{split}
\F{x}^n_j(\tau) &= n^{-1/6} \left(f_j^n(t(\tau)) - 2 (n - t(\tau))^{1/2} \right)~, \\
\F{x}_j'^n(\tau) &= n^{-1/6} \left(f_{j}'^n(t(\tau)) - 2 (n - t(\tau))^{1/2} \right)~, 
\end{split}
\end{equation} 
and 
\begin{equation}\label{eq:scalingK}
\begin{split}
\K{x}^n_j(\tau) &= n^{-1/6} \left(o_j^n(t(\tau)) - 2 (n - t(\tau))^{1/2} \right)~, \\
\K{x}_j'^n(\tau) &= n^{-1/6} \left(o_{j}'^n(t(\tau)) - 2 (n - t(\tau))^{1/2} \right)~.
\end{split}
\end{equation} 

\begin{customthm}{\ref{thm:main}$^\#$}\label{thm:main2}
	The  processes $\F{X}^n(\tau)$  and $\K{X}^n(\tau)$ converge to the Airy$_2$ line ensemble as $n \rightarrow \infty$, in the sense of finite-dimensional marginals.
\end{customthm}

\noindent Each of the statements of Theorem~\ref{thm:main2} is equivalent to 
Theorem~\ref{thm:main}: for the Frobenius version, the equivalence follows directly from the scaling,
while for the Kerov version, it is a consequence of the following fact (cf.\ \cite[Proposition 2]{Ok}): for any fixed $i,$
\[ \mathbb{P}_n\left\{ \text{$(i, \lambda_i)$ is a corner} \right\} \to 1~, \quad n \to \infty~. \]

\begin{remark}
	From the time-translation invariant nature of our stochastic system, it suffices to establish the result for $\tau \ge 0$ (which we assume for the rest of the paper). To extend it for $\tau \ge -1$ (say), our arguments can be repeated with $n$ replaced by $n' \approx n + 2n^{5/6}$; this simply shifts the parameter $\tau$ by 1. 
\end{remark}

\subsubsection{Strategy of the proof}
In a few lines, the strategy can be described as follows. 
Following the work of Okounkov \cite{Ok}, we consider expressions of the form 
\begin{equation}\label{eq:1}
\tr \left( \prod_{p=1}^k  X^{r_p}_{n - t_p- p+1} \right)
\end{equation}
with the exponents $r_p$ proportional to $n^{1/3}$ in the limit $n \rightarrow \infty$. Here, $X_1, X_2, \cdots$ are the Jucys--Murphy (JM) elements of the group algebra of $S_n$
(see (\ref{eq:jm}) below), and the trace is taken in the sense of the left regular representation (see Section~\ref{sub:GT}). The above quantity (after being suitably scaled) is an approximation of the mixed moments of the Laplace transforms of $X^n(\tau)$, which characterize the distribution. On the other hand, it counts the number of solutions to certain equations in $S_n$. 

To count these solutions, we combine the strategy of \cite{Ok} with the following 
construction, parallel to the one employed in \cite{FS} and in subsequent works on
random matrices which are surveyed in  \cite{Sodin2014}.  
For a suitably defined  family of polynomials $P_{l}^{n}$ of Chebyshev type  (see \ref{sub:op}), the modified moments
\begin{equation}\label{eq:2}
\tr \left( \prod_{p=1}^k P_{m_p}^{n-t_p-p}(X_{n-t_p - p+1}) \right)
\end{equation}
count the solutions to the same equations is $S_n$ which satisfy a certain 
irreducibility property.   We compute the asymptotics of (\ref{eq:2}) by classifying
them into topological equivalence classes.  The main
step in our argument is to show that the asymptotics of (\ref{eq:2}) matches the one appearing in \cite{Sodin2015}. Then we go back to the moments (\ref{eq:1}). 

\subsubsection{Plan of the paper}

The proof of the theorem exploits the similarity between the Frobenius coordinates
of a random partition and the eigenvalues of a random matrix. Therefore, in the next section, we briefly review several parallel results in random
matrix theory, particularly, the work of Soshnikov \cite{Sos}. Then in Section \ref{s:JM} we introduce the Jucys--Murphy elements, and reduce the theorems to the main technical estimate, Proposition~\ref{prop'}.

In Section \ref{s:mmm}, we describe the combinatorial constructions on which the proof is
based. Again, we emphasise the similarity to the constructions of random matrix theory.

The proof of Proposition~\ref{prop'} is deferred to Section \ref{S:main}, where it is preceded
by the proof of Lemma~\ref{lem:MMM} describing the asymptotics of modified moments
\eqref{eq:2}, and of Lemma~\ref{lem:MM} describing the asymptotics of \eqref{eq:1}.

\section{Parallel results in random matrix theory}\label{s:rm}

Theorem~\ref{thm:main} as well as Corollary~\ref{cor:BDJ} (the Baik--Deift--Johansson
conjecture proved in \cite{BOO,Ok,Jo}) bear a similarity to several results in the
theory of random matrices, which we now survey. 

\subsection{Wigner's law}
Consider an infinite Hermitian random matrix $H = (H(i,j) )_{i,j\ge 1}$  drawn from
the Gaussian Unitary Ensemble, meaning that $H(i, j) = (G(i, j) + \overline{G(j, i)})/\sqrt{2}$,
where $G(i, j)$ are  independent standard complex Gaussian entries. Take $N \ge 1$ and let us denote the eigenvalues of the principal submatrix $H^{(N)} =  (H(i,j))_{1 \le i,j \le N}$ by 
\[ \mu_1^{(N)} \ge \mu_2^{(N)} \ge \dotsb \ge \mu_N^{(N)}~.\]
According to Wigner's law \cite{Wi1,Wi2}  the sequence of appropriately scaled empirical measures converges to the (deterministic) semicircle measure; that is, 
\begin{equation}\label{eq:wig}
\frac{1}{N} \sum_{i=1}^N \delta_{\mu_i^{(N)}/N^{1/2}} \longrightarrow  \frac{1}{2\pi} \sqrt{(4-x^2)_+} dx
\end{equation}
as $N \rightarrow \infty$.  

Wigner's law is not specific for Gaussian entries, and remains valid for other
random matrix ensembles with independent entries. For example, it holds for
the ensemble of matrices $H$ in which $(H(i,j))_{i>j}$ are independent
and uniformly distributed on the unit circle, $H(j,i) = \overline{H(i,j)}$, and 
$H(i,i)=0$. This ensemble will play a r\^ole in the sequel (see e.g.\ (\ref{eq:cheb_rm})).

\subsection{Eigenvalues at the edge}\label{s:sosh}

Given Wigner's law, it is natural to consider the rescaled eigenvalues
\begin{equation}
y_i^{(N)} = N^{1/6} \left( \mu_i^{(N)} - 2N^{1/2} \right) \quad\mbox{and}\quad y_i'^{(N)} = -N^{1/6} \left( \mu_{N-i}^{(N)} - 2N^{1/2} \right)
\end{equation}
at each edge of the spectrum.

It turns out that the sequence of random measures $\sum_i \delta_{y_i^{(N)}}$ converges to the Airy$_2$ point process. This was first established for the Gaussian Unitary Ensemble by Tracy-Widom \cite{TW1} and Forrester \cite{F}, with the help of explicit formulas for the joint distribution of eigenvalues. 

Soshnikov \cite{Sos} proved that the Airy$_2$ limit is not specific to Gaussian matrices, 
and holds for arbitrary Hermitian matrices $H = (H(i,j))_{i,j\geq 1}$ provided that
the entries $(H(i,j))_{i\geq j\geq1}$ below the main diagonal are independent, and certain technical assumptions are satisfied. The assumptions of \cite{Sos} have been
since relaxed; the most general result was proved by Lee and Yin \cite{LeeYin}.

We briefly describe the strategy of Soshnikov.
Consider the  mixed moments, i.e.\ the expectations of products of  traces
\begin{equation}\label{eq:trace_RM}
\mathbb{E} \left( \prod_{p=1}^k \tr({H^{(N)}}^{r_p}) \right)~,\end{equation}
in the asymptotic regime in which $k$ is fixed and $r_p \sim 2\alpha_p N^{2/3}$.
We have
\begin{equation}\label{eq:trace_RM_expansion}
\begin{split}
\tr {H^{(N)}}^r &= 2^r N^{r/2} \left( \sum_{i=1}^N \left( \frac{\mu_i^{(N)}}{2N^{1/2}}\right)^r \right) \\
&\approx 2^r N^{r/2} \sum_{i \ge 1} \left( \exp(\alpha y_i^{(N)}) + (-1)^r \exp(\alpha y_i'^{(N)}) \right)~.
\end{split}
\end{equation}
Therefore for $ k \ge 1$,
\begin{equation}\label{eq:trace_expansion}
\begin{split}
 \prod_{p=1}^k \tr \left( \frac{H^{(N)}}{2N^{1/2}} \right)^{r_p}\approx  \prod_{p=1}^k \sum_{i \ge 1} \left( \exp(\alpha_p y_i^{(N)}) + (-1)^{r_p} \exp(\alpha_p y_i'^{(N)}) \right)~.
\end{split}
\end{equation}
More formally, the difference between the left-hand side and the right-hand side 
tends to zero conditionally on the event 
\[ \Omega_N = \left\{  \max(y_1^{(N)}, y_1'^{(N)}) \leq N^{1/10}\right\} \] 
(the constant $1/10$ has no special significance here). Let 
\[ \phi(\bar\alpha) = \phi(\alpha_1,\cdots,\alpha_k) = \mathbb{E} \prod_{p=1}^k \sum_i \exp(\alpha_p x_i) 
 \]
 be the Laplace transforms of the correlation functions of the  Airy$_2$ point process. Soshnikov proved the main
 estimate
 \begin{equation*}
 \begin{split}
\mathbb{E}\left( \prod_{p=1}^k \tr \left( \frac{H^{(N)}}{2N^{1/2}} \right)^{r_p} \right) - \sum_{I\subset \{1,\cdots,k\} } (-1)^{\sum_{i \in I} r_i} \phi(\overline{\alpha}|_I)\phi(\overline{\alpha}|_{I^c}) \longrightarrow 0
\end{split}
\end{equation*} 
in the asymptotic regime above. From this he deduced that $\mathbb{P}(\Omega_N) \to 1$ and that 
\begin{equation}\label{eq:sosh.conv}
\phi^{(N)}(\overline{\alpha}) = \mathbb{E} \prod_{p=1}^k \sum_i \exp(\alpha_p y_i^{(N)}) \mathbbm{1}_{\Omega_N} 
\longrightarrow \phi(\bar\alpha)~.
\end{equation}
This implies convergence of correlation functions, which in turn implies convergence
in distribution. We remark that (\ref{eq:sosh.conv}) also holds without restriction 
to $\Omega_N$, cf.\ the proof of Theorem~\ref{thm:main} in Section~\ref{s:pfthm} below.

\subsection{Stochastic setting}

The results discussed in the previous paragraph admit a generalization to a stochastic
setting, somewhat analogous to that of the current paper. In the case of the Gaussian
Unitary ensemble, the joint distribution of $\mu_j^{(N)}$ is determinantal, as proved
by Johansson--Nordenstam \cite{JohNor} and Okounkov--Reshetikhin \cite{OR}. The edge 
scaling limit was studied by Forrester--Nagao \cite{ForNag}, who proved the following. 
Consider the rescaled eigenvalues 
\begin{equation}\label{eq:scaling_rm}
y_j^{(N)}(\tau) = N^{1/6} \left(\mu_j^{(N+t(\tau))} - 2(N+t(\tau))^{1/2} \right)~,\quad t(\tau) = 2\tau N^{2/3}~.
\end{equation}  With a piecewise linear interpolation, we obtain a decreasing sequence of continuous functions $(y_1^{(N)}(\tau) \ge y_2^{(N)}(\tau) \ge \cdots)$. Forrester and Nagao showed that these converge to the Airy$_2$ line ensemble. Their argument makes use of the determinantal formul{\ae}.

The Forrester--Nagao result was re-proved and generalized in \cite{Sodin2015} using
the method of (modified) moments. A byproduct of the argument described there
is a description of the Laplace transform of the Airy$_2$ line ensemble,
\begin{equation}\label{eq:phi1} \phi(\bar\alpha,\bar\tau) =  \mathbb{E} \prod_{p=1}^k \sum_i \exp(\alpha_p x_i(\tau_p))~. 
\end{equation}
The description is reproduced in Section~\ref{s:cont} below; this, 
rather than (\ref{eq:ak}), is the description that we use in the proof of the main theorem.

\section{Spectra of JM elements and mixed moments}\label{s:JM}

\subsection{Gelfand--Tsetlin decomposition}\label{sub:GT}

The left regular representation of $S_n$ is the vector space of formal linear combinations
$\sum_{\pi \in S_n} c_\pi \pi$ with the action of $S_n$ by left multiplication. It has a decomposition
 $\oplus_{\lambda \vdash n} \lambda^{\mathrm{dim}(\lambda)}$, where we slightly abuse
 notation and denote the representation space corresponding to $\lambda$
 by the same letter $\lambda$. 
 
 The Gelfand-Tsetlin decomposition is a decomposition of each space $\lambda$ into a direct sum of one-dimensional subspaces, which is constructed as follows (cf.\ \cite{CST}). Consider the collection of chains (semistandard Young tableaux)
 \[ \mathrm{Tab}(\lambda)= \left\{ \T = (\lambda^0 = \varnothing \leq \lambda^1 \leq \lambda^2 \leq \cdots \leq \lambda^n = \lambda)~, \quad |\lambda_k| = k\right\} ~. \]
 To each tableau $\T$, we associate the space $\T$ of vectors $v \in \lambda$ such that,
 for every $j$, $v$ lies in the representation of $S_j$ which is isomorphic to $\lambda^j$. 
 Due to lack of multiplicities (cf.\ \ref{sub:defdecay}),  $\dim \T = 1$, hence $\lambda=\oplus_{\T \in \mathrm{Tab}(\lambda)} \T$. Thus $\T \mapsto \dim \T / n!$ is a probability
 distribution on tableaux of size $n$. This distribution coincides with the stochastic system defined in \ref{sub:defdecay}; in particular, its projection to $\lambda^n$ coincides with $\mathbb{P}_n$.
 
The analogy with the random matrix setting of Section~\ref{s:rm} is as follows. Pick a random tableau $\T$ according to the distribution specified above. The Frobenius 
coordinates $\{f_j^n\} \cup \{-f_j'^n\}$ are the counterpart of the eigenvalues 
$\{\mu_j^{(N)}\}$ of $H^N$. Note that the number of Frobenius coordinates
is not constant, however, it is strongly concentrated about $\frac{4}{\pi}\sqrt{n}.$ Therefore we think of the Frobenius coordinates of a random partition as the counterpart
of the eigenvalues of a random matrix of dimension $\frac4\pi \sqrt{n}$. 

The counterpart of Wigner's law (\ref{eq:wig}) is  the limit shape of \cite{LS,VK1,VK2}: in terms of the rescaled Frobenius coordinates, we have
\begin{equation}\label{eq:lsvk}
\frac{\pi}{4\sqrt{n}} \sum_j \left\{ \delta_{f_j^n/\sqrt{n}} + \delta_{f_j'^n/\sqrt{n}} \right\} \longrightarrow  \frac{1}{4} \arccos\left( \frac{|x|}{2} \right) \cdot \mathbbm{1}_{[-2,2]} dx~, \end{equation}
see Figure~\ref{fig:lskv}.

\begin{figure}
	\includegraphics[scale=.4]{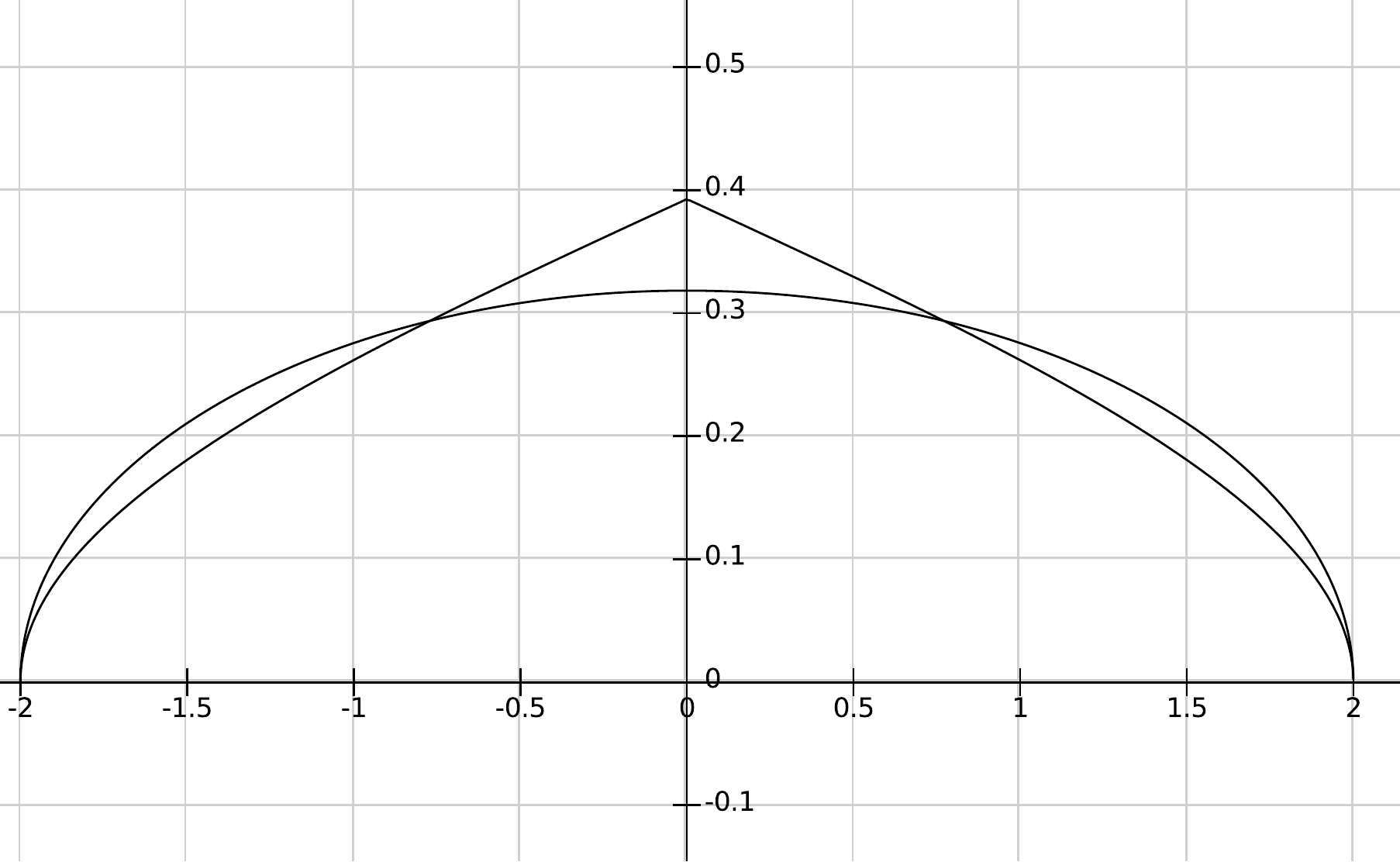}
	\centering
	\caption{The Logan--Shepp--Vershik--Kerov limit shape in Frobenius coordinates and Wigner's semicircle}\label{fig:lskv} 
\end{figure}

We base the
discussion below on this simple-minded analogy. A more precise analogy between random partitions and random matrices, in which
the counterpart of the eigenvalue distribution of a random matrix
is the so-called transition measure
of a partition, was developed by Kerov, see \cite{Kerov} and the monograph \cite{Ker_book}.

\subsection{JM elements and their spectra}\label{sub:JM}

The argument of Okounkov \cite{Ok} which we develop in this paper is based on
the interpretation of the Frobenius coordinates of a partition as eigenvalues
of certain elements of the group algebra of $S_n$, considered as operators via
the left regular representation. First consider the JM elements, which are elements in the group algebra of $S_n$ defined as the following sums of transpositions: 
\begin{equation}\label{eq:jm}
\begin{split}
X_1 &= 0 \\
X_2 &= (1 2) \\
X_3 &= (13) + (2 3) \\
\cdots & \\
X_n &= (1 n) + (2 n) + \cdots + (n-1 \, n)
\end{split}
\end{equation}
These elements were introduced by  A.\ A.\ Jucys and G.\ E.\ Murphy, after whom
they are now named.\footnote{Following Stanley \cite{St}, we mention that these 
elements  appeared in a paper of A.\ Young \cite{Young} published in 1901.} 
 In the work of Okounkov and Vershik \cite{OV}, see further \cite{CST}, the JM elements
elements are used as a starting point to reconstruct the representation theory
of the symmetric group. Here we recall some basic facts about the spectra of JM elements, following \cite{Ok} and \cite{OV}. 
 
The subspaces $\T$ are invariant for all the Jucys--Murphy elements. Namely,
\[ X_n|_{\T} = \operatorname{ct}(\square)~,\quad \text{where} \quad \lambda^n = \lambda^{n-1} + \square~. \]
Therefore the restriction of $X_n$ to $\lambda$ has eigenvalues $\operatorname{ct}(\square_i) = \lambda_i - i$ with multiplicity $\mathrm{dim}(\lambda - \square_i)$ where $\square_i = (i,\lambda_i)$ ranges over the corners of $\lambda$. Now the  spectra of $X_{n-1}, X_{n-2}$, and so on can also be described, since $X_{n-k+1}$ plays the role of the first JM element in the subgroup $S_{n-k+1}$ (viewed as the collection of permutations fixing the letters $n-k+2,\cdots,n$):
\[ X_{n-k+1}|_ {\T} = \operatorname{ct}( \lambda^{n-k+1} /\lambda^{n-k}) =\operatorname{ct}(\square)~, \quad \text{where} \quad \lambda^{n-k+1} = \lambda^{n-k} + \square~.\]
That is, each eigenspace of $X_n|_\lambda$ which has dimension $\dim(\l - \square_i)$ splits into eigenspaces of $X_{n-1}$ with dimensions $\dim((\l - \square_i) -\square_j)$ where $\square_j$ ranges over the corners of $\l - \square_i$, and so on.

\begin{example} The sequence of Young diagrams on Figure~\ref{fig:young}
forms a Young tableau $\T$ of size $8$. In this case,
\[\begin{split}
&X_1|_\T = 0~, \, X_2|_\T =  1~, \, X_3|_\T = -1, \, X_4|_\T = 0~, \\
&X_5|_\T = -2~, \, X_6|_\T = 2~,\,  X_7|_\T = 3~, \, X_8|_\T = 1~.
\end{split}\]
\end{example}

\subsection{Powers of JM elements}
According to \ref{sub:JM}, the $r$-th power of a JM element acts on a tableau
$\T$ by
\[ X_{n-k+1}^r|_ {\T} = \operatorname{ct}( \lambda^{n-k+1} /\lambda^{n-k})^r~.\]
Introduce the elements 
\begin{equation}\label{eq:Y}
\begin{split}
Y_{m,r} = \sum_{q=1}^m X_q^r~, \quad 1 \leq m \leq n~, r \geq 0~.
\end{split}
\end{equation}
Then 
\begin{equation}Y_{m,r}|_{\T} 
= \sum_{\square \in \lambda^m} \operatorname{ct}(\square)^r 
= \sum_j \sum_{i=j+1}^{\lambda^m_j} (i-j)^r
+(-1)^r \sum_j \sum_{i=j+1}^{\lambda'^m_j} (i-j)^r~.
\end{equation}
Introducing the notation
\[ S_p(l) = \sum_{i=1}^l i^p \]
for power sums and recalling the definition (\ref{eq:deffr}) 
of Frobenius coordinates, we obtain:
\begin{equation}\label{eq:Yact} 
Y_{m,r}|_{\T} 
= \sum_j S_r(f_j^m) + (-1)^r \sum_j S_r(f'^m_j)~. 
\end{equation}
Note the similarity to the random matrix moments
\[ \tr (H^{(N)})^r = \sum_{\mu_j \geq 0} (\mu_j^{(N)})^r + (-1)^r \sum_{\mu_j < 0} (-\mu_j^{(N)})^r~.  \]

\begin{remark}  The elements $Y_{m,r}$ are symmetric expressions (\ref{eq:Y}) in the JM elements $X_1, \cdots, X_n$, and are also functions (\ref{eq:Yact}) of the Frobenius coordinates. This is consistent with a theorem of Jucys \cite{Jucys} according to which 
the algebra of symmetric functions of $X_1, \cdots, X_n$
coincides with the center of the group algebra of $S_n$. 
\end{remark}

\subsection{Mixed moments and the proof of the main theorem}\label{s:pfthm}
Consider the (deterministic) expressions 
\begin{equation}\label{eq:JMsymtraces}
\mathcal{M}^\sym(\overline{r},\overline{t})  = \frac{1}{n!} \tr \prod_{p=1}^k \left( \frac{r_p}{(4n_p)^{\frac{r_p+1}2}} Y_{n_p, r_p} \right)\quad\mbox{with}\quad n_p = n-t_p~.
\end{equation}
According to (\ref{eq:Yact}), 
\begin{equation}\label{eq:JMsymtraces'}
\mathcal{M}^\sym(\overline{r},\overline{t}) = \mathbb{E}
\prod_{p=1}^k \left( \sum_j \frac{r_p}{(4n_p)^{\frac{r_p+1}2}} S_{r_p}(f^{n_p}_j) + (-1)^{r_p}\sum_j \frac{r_p}{(4n_p)^{\frac{r_p+1}2}} S_{r_p}(f'^{n_p}_j)\right)~.
\end{equation}

\begin{proposition}\label{prop'} We have a bound
\begin{equation}\label{eq:JM_bound}
\begin{split}
\mathcal{M}^\sym(\overline{r},\overline{t}) \le  \prod_{p=1}^k \frac{Cn^{1/2}}{r_p^{3/2}} \exp(C_k r_p^3/n)~.
\end{split}
\end{equation}
	 Moreover, in the asymptotic regime 
\begin{equation}\label{eq:JM_asymp}
r_p \sim  2\alpha_p n^{1/3}~,\qquad t_p = 2\tau_p n^{5/6}~,
\end{equation}
we have:
\begin{equation}\label{eq:convergence}
\mathcal{M}^\sym(\overline{r},\overline{t}) - 
\sum_{I\subset \{1,\cdots,k\} } (-1)^{\sum_{p \in I} r_p} \phi(\overline{\alpha}|_I, \bar\tau|_I)\phi(\overline{\alpha}|_{I^c}, \bar\tau_{I^c}) \longrightarrow 0~.
\end{equation}
\end{proposition}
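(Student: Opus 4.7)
The plan is to follow the modified-moments strategy sketched in the last two paragraphs of \S1.3, whose random matrix counterpart is developed in \cite{FS,Sodin2014,Sodin2015}. I view $\mathcal{M}^\sym(\overline r,\overline t)$ combinatorially, as a generating function for walks in $S_n$ built from the transpositions appearing in the JM elements, and match its leading asymptotics term by term with the random matrix calculation of \cite{Sodin2015}, which is known to yield $\phi(\overline\alpha,\overline\tau)$. The work splits into three stages, corresponding to Lemma~\ref{lem:MM}, Lemma~\ref{lem:MMM}, and the inversion from modified to ordinary moments.

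First I would reduce to the pure-power mixed moments $\frac{1}{n!}\tr\prod_{p=1}^k X_{n-t_p-p+1}^{r_p}$, which is the content of Lemma~\ref{lem:MM}. Expanding $Y_{n_p,r_p}=\sum_{q=1}^{n_p}X_q^{r_p}$ in (\ref{eq:JMsymtraces}) yields a sum over index choices $(q_1,\dotsc,q_k)$; a crude bound $\tr X_q^{r_p}\lesssim (2\sqrt{q})^{r_p+1}$ together with the normalization $r_p/(4n_p)^{(r_p+1)/2}$ shows that in the regime (\ref{eq:JM_asymp}) only choices with $q_p$ within $O(n^{5/6})$ of $n_p$ contribute to leading order. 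The ``staircase'' choice $q_p = n - t_p - p + 1$ captures these, and the artificial shift by $p-1$ ensures the indices are distinct, which is needed for the subsequent combinatorial analysis to avoid collisions between walks.

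For the pure-power moments, following \cite{Ok} I would then introduce the Chebyshev-type polynomials $P_l^n$ of \S\ref{sub:op}, designed so that
\[
\frac{1}{n!}\tr\prod_{p=1}^k P_{m_p}^{n-t_p-p}(X_{n-t_p-p+1})
\]
counts only \emph{irreducible} solutions to the associated equation in $S_n$; this is Lemma~\ref{lem:MMM}. Irreducible walks admit a topological (genus) classification, and the enumeration would show that only planar diagrams survive the limit, with their weighted count matching precisely the planar count for the random matrix traces treated in \cite{Sodin2015}, with $N\sim\tfrac{4}{\pi}\sqrt{n}$ corresponding to the typical number of Frobenius coordinates produced by (\ref{eq:lsvk}). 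Each additional handle in the genus expansion costs a factor of order $r_p^3/n$, which is also the combinatorial origin of the $\exp(C_k r_p^3/n)$ factor in (\ref{eq:JM_bound}).

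The final step is the inversion from modified to ordinary moments: expand the monomials $X^{r}$ as linear combinations of $P_l^n(X)$ and sum the asymptotics supplied by Lemma~\ref{lem:MMM}. The alternating decomposition $\sum_{I}(-1)^{\sum_{p\in I}r_p}\phi(\overline\alpha|_I,\overline\tau|_I)\phi(\overline\alpha|_{I^c},\overline\tau|_{I^c})$ appears naturally because the JM elements have content-eigenvalues of both signs: from (\ref{eq:Yact}) the positive contents correspond to the $f_j$ edge and the negative ones to the $-f'_j$ edge, with the $r$-th power of the latter producing the sign $(-1)^{r}$, in exact analogy with the two edges of the Wigner semicircle in (\ref{eq:trace_expansion}). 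The main obstacle I anticipate is the genus expansion itself: in the random matrix setting of \cite{FS,Sodin2015} the suppression of higher genera follows cleanly from the independence of matrix entries, whereas here the combinatorial weights are correlated through the Gelfand--Tsetlin constraint described in \S\ref{sub:GT}. Showing that this correlation perturbs only sub-leading terms, and uniformly controlling the non-planar remainder by $\exp(C_k r_p^3/n)$, is the delicate heart of the argument; it hinges on choosing the $P_l^n$ so that their orthogonality/recurrence relations are compatible with the Logan--Shepp--Vershik--Kerov profile (\ref{eq:lsvk}), the partition analogue of the semicircle density.
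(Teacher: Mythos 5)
Your three-stage outline --- asymptotics of the modified moments $\widetilde{\mathcal{M}}$ (Lemma~\ref{lem:MMM}), inversion to the ordinary moments $\mathcal{M}$ (Lemma~\ref{lem:MM}), and averaging over the JM index to reach $\mathcal{M}^\sym$ --- is indeed the route the paper takes, and you correctly identify the main tools: non-backtracking path enumeration via $P_l^n$, topological classification of paths, and the match with the random-matrix count of \cite{Sodin2015}. However, two of your bridging claims are wrong, and carrying them out as stated would produce the wrong limit.

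The assertion that ``only planar diagrams survive the limit'' is false, and your own estimate contradicts it: if each handle costs a factor $\sim r_p^3/n$ and $r_p\sim 2\alpha_p n^{1/3}$, that cost is $\sim 8\alpha_p^3=O(1)$, so higher-genus diagrams contribute at the same order as the planar one. The limit $\psi$ is an honest genus series --- for $k=1,\tau=0$ one has $\psi(\alpha,0)=\frac12\sum_{s\ge 2} D(s)\bigl(\tfrac{\alpha}{2}\bigr)^{3s-2}/(3s-2)!$ over all $s$, where $D(s)$ counts $1$-diagrams of genus $s/2$ (Example~\ref{ex:k=1}); truncating at $s=2$ yields a different function, not $\phi$. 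The $\exp(C_k r_p^3/n)$ factor in (\ref{eq:JM_bound}) controls the tail of this series uniformly in a cutoff $s\le s_0$; it does not suppress higher genera. Likewise, the reduction from $\mathcal{M}^\sym$ to $\mathcal{M}$ cannot be done by selecting a single ``staircase'' term of $Y_{n_p,r_p}=\sum_q X_q^{r_p}$. After normalization, the sum over $q=n_p-m$ is a Riemann sum for $\frac{r}{2n_1}\int_0^\infty e^{-mr/(2n_1)}\,dm=1$ with decay scale $m\sim 2n_1/r\sim n^{2/3}$; the prefactor $r_p/(4n_p)^{(r_p+1)/2}$ in (\ref{eq:JMsymtraces}) is precisely what makes this integral converge to $1$, and applying (\ref{eq:MM_asym}) along the sum is what supplies the $\tau_p$-dependence of $\phi$. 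Picking one $q$ discards both the normalization and the integral structure (the paper's \S\ref{s:prop'} carries out exactly this integration). You do describe the role of the $p-1$ shift correctly --- it keeps the special letters $n-t_p-p+1$ distinct so that Lemma~\ref{lem:chebyshev_g} applies --- but the ``delicate heart'' you anticipate, a correlation coming through the Gelfand--Tsetlin decomposition, does not arise: the trace in the regular representation is an exact count of closed walks in the Cayley graph of $S_n$, and the diagram bookkeeping is a purely combinatorial statement about those walks, with no hidden probabilistic dependence to control.
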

The proposition will be proved in Section~\ref{S:main}.

\begin{proof}[Proof of Theorem~\ref{thm:main}]
We prove the Frobenius version of Theorem~\ref{thm:main2}. Let us show  that 
for any $k \geq 1$ and any $\tau_1, \cdots, \tau_k \geq 0$, $r_1, \cdots, r_p = \pm 1$
\[\begin{split} 
&\mathbb{E} \prod_{p=1}^k \left\{ \sum_j \exp(\alpha_p \F{x}_j^n(\tau_p)) + (-1)^{r_p} \sum_j\exp(\alpha_p \F{x}_j'^n(\tau_p)) \right\} 
\\
&\qquad\to \sum_{I\subset \{1,\cdots,k\} } (-1)^{\sum_{p \in I} r_p} \phi(\overline{\alpha}|_I, \bar\tau|_I)\phi(\overline{\alpha}|_{I^c}, \bar\tau_{I^c})~, \quad n \to \infty~.
\end{split}\]
This will imply that the Laplace transforms of the correlation measures of 
$\F{X}^n(\tau)$ converge to those of the Airy$_2$ line ensemble, and therefore
the processes converge in distribution.

Denote 
\[ \Omega_{n} = \left\{ \max_{1 \leq p \leq k} \max(\F{x}^n_1(\tau_p), \F{x}'^n_1(\tau_p)) < n^{1/10}\right\}~. \]
It suffices to show that
\begin{equation}\label{eq:bd.goodevent}\begin{split} 
&\mathbb{E} \mathbbm{1}_{\Omega_n} \prod_{p=1}^k \left\{ \sum_j \exp(\alpha_p \F{x}_j^n(\tau_p)) + (-1)^{r_p} \sum_j\exp(\alpha_p \F{x}_j'^n(\tau_p)) \right\} 
\\
&\qquad\to \sum_{I\subset \{1,\cdots,k\} } (-1)^{\sum_{p \in I} r_p} \phi(\overline{\alpha}|_I, \bar\tau|_I)\phi(\overline{\alpha}|_{I^c}, \bar\tau_{I^c})~, \quad n \to \infty~.
\end{split}\end{equation}
and that 
\begin{equation}\label{eq:bd.badevent}
\mathbb{E} \mathbbm{1}_{\Omega_n^c} \prod_{p=1}^k \left\{ \sum_j \exp(\alpha_p \F{x}_j^n(\tau_p)) + (-1)^{r_p} \sum_j\exp(\alpha_p \F{x}_j'^n(\tau_p)) \right\} \to 0~.\end{equation}
Introduce the events 
\begin{equation*}
\begin{split}
B_{l,n} &= \left\{ 2^l \le \max\left(\frac{f_1^{n}}{2n^{1/2}}-1,\frac{f_1'^{n}}{2n^{1/2}}-1\right) \le 2^{l+1}   \right\}~, \quad l \geq l_0 = - \lceil \frac{7}{30} \log_2 n \rceil - 2~.
\end{split}
\end{equation*} 
By the case $k=1$ of Proposition~\ref{prop'} and the Chebyshev inequality,
\[ \mathbb{P}(B_{l,n}) < \exp(-cn^{1/2}2^{3l/2})~, \quad l \geq l_0~.\] 
The complement of $\Omega_n$ is contained in the union of $B_{l,n_p}$ over $l \geq l_0$ and $1 \leq p \leq k$, therefore  $\mathbb{P}(\Omega_n) \to 1$. Moreover, on the set $B_{l,n_p}$ we have \begin{equation*}
\begin{split}
\sum_j \exp(\alpha_1 \F x_j^n(\tau_p))+ \sum_j \exp(\alpha_1 \F x_j'^n(\tau_p)) < n \exp\left( Cn^{1/3}2^l \right)~,
\end{split}
\end{equation*} 
hence (\ref{eq:bd.badevent}) holds.

The proof of (\ref{eq:bd.goodevent}) follows the strategy of \cite{Sos} 
as described in \ref{s:sosh}. First,  the bounds above imply that 
\begin{equation}
 \mathbb{E} \mathbbm{1}_{\Omega_n^c} \prod_{p=1}^k \left( \frac{r_p}{(4n_p)^{\frac{r_p+1}2}} \left[ \sum_j S_{n_p}(f_j^{n_p})+\sum_j S_{n_p}(f_j'^{n_p})\right] \right) \to 0~.
 \end{equation}
 Therefore by Proposition~\ref{prop'}
 \begin{equation}\label{eq:cond1}
 \begin{split}
 &\mathbb{E} \mathbbm{1}_{\Omega_n} \prod_{p=1}^k \left( \frac{r_p}{(4n_p)^{\frac{r_p+1}2}} \left[ \sum_j S_{n_p}(f_j^{n_p})+\sum_j S_{n_p}(f_j'^{n_p})\right] \right) \\
 &\qquad\to \sum_{I\subset \{1,\cdots,k\} } (-1)^{\sum_{p \in I} r_p} \phi(\overline{\alpha}|_I, \bar\tau|_I)\phi(\overline{\alpha}|_{I^c}, \bar\tau_{I^c})~.
 \end{split}
 \end{equation}
Next, the contribution of $j$ with
\[ \frac{f_j^{n_p}}{2\sqrt{n_p}} < 1 - n_p^{-1/3+1/10} 
\quad \text{or} \quad \frac{f_j'^{n_p}}{2\sqrt{n_p}} < 1 - n_p^{-1/3+1/10}  \]
to (\ref{eq:cond1}) tends to zero, therefore (\ref{eq:cond1}) remains valid if the
$p$-th sum is restricted to $j$ such that 
\[ \left| \frac{f_j^{n_p}}{2\sqrt{n_p}} - 1\right| < n_p^{-7/30}~, \quad
 \left| \frac{f_j'^{n_p}}{2\sqrt{n_p}} - 1\right| < n_p^{-7/30}~.\]
For such $j$,
\[ \left|\frac{r_p}{(4n_p)^{\frac{r_p+1}2}} S_{n_p}(f_j^{n_p}) - \exp(\alpha_p \F{x}_j^n(\tau_p))\right| \leq C n_p^{-2/15} \frac{r_p}{(4n_p)^{\frac{r_p+1}2}} S_{n_p}(f_j^{n_p})  \]
and we obtain (\ref{eq:bd.goodevent}).
\end{proof}

\section{A modification of the moment method}\label{s:mmm}

\subsection{Modified mixed moments}

\subsubsection{Orthogonal polynomials}\label{sub:op}
For any $n$, define a sequence of polynomials
\[ P_l^n(x) = x^l +\text{lower order terms} \]
by the relations
\begin{equation}\begin{split}
&P_0^n(x) = 1~, \quad P_1^n(x) = x~, \quad P_2^n(x) = x^2 - n~, \\
&P_l^n(x) =  x P_{l-1}^n(x) - (n-1) P_{l-2}^n(x) \quad \text{for} \quad l \ge 3~.
\end{split}\end{equation}
Let  $ U_0, U_1, U_2, \cdots$  be the Chebyshev polynomials of the second kind defined via \begin{equation*}
	\begin{split}
	U_l(\cos\theta) = \frac{\sin((l+1)\theta)}{\sin\theta}~.
	\end{split}
	\end{equation*} 
Then $P_l^n(x)$ can be explicitly written as
	\begin{equation}\label{eq:viacheb}
	P_l^{n}(x) = (n-1)^{l/2} \times \left[  U_l\left( \frac{x}{2\sqrt{n-1}} \right) - \frac{1}{n-1} U_{l-2}\left( \frac{x}{2\sqrt{n-1}} \right)   \right] 
	\end{equation}
with the convention $U_{-2} \equiv U_{-1} \equiv 0$. Equivalently,
	\begin{equation}\label{eq:viachebinv}
	U_l\left( \frac{x}{2\sqrt{n-1}} \right) = \frac{1}{(n-1)^{l/2}}
	\sum_{k=0}^{\lfloor l/2\rfloor} P_{l-2k}^n(x)~.
	\end{equation}

We remark that $P_l^n$ are exactly the orthogonal polynomials with respect to the Kesten--McKay measure 
\[ \frac{n}{2\pi} \frac{\sqrt{4(n-1)-x^2}}{n^2 - x^2} dx~,\]
whereas $U_l(x/2)$ are orthogonal with respect to the semicircle measure.

\subsubsection{Enumeration of reduced paths}

The polynomials $P_l^n$ are convenient for enumerating reduced paths
on $n$-regular graphs. The following lemma may be seen as an instance
of this general principle; in our case, the graph is the Cayley graph of $S_n$
with respect to the generators $(k \, n)$, $1 \leq k \leq n-1$.

\begin{lemma}\label{l:cheb}
For any $n \geq 1$ and $l \geq 0$,
\[ P_l^{n-1}(X_n) = \sum_{\substack{1 \leq a_1, \cdots, a_l < n \\ a_j \neq a_{j+1}}} (a_1 n) (a_2 n) \cdots (a_l n)~.\]
\end{lemma}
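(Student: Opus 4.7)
The plan is to verify the identity by induction on $l$, exploiting the fact that both sides satisfy the same three-term recurrence with matching initial values.

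Write $R_l$ for the right-hand side. The cases $l = 0, 1$ are immediate: $R_0 = 1 = P_0^{n-1}(X_n)$ (empty product) and $R_1 = \sum_{a=1}^{n-1}(a\,n) = X_n = P_1^{n-1}(X_n)$. For $l = 2$, I would expand $X_n^2 = \sum_{a_1, a_2 = 1}^{n-1} (a_1\,n)(a_2\,n)$ and split according to whether $a_1 = a_2$. The diagonal contributes $(n-1)$ copies of the identity (using $(a\,n)^2 = \mathrm{id}$), while the off-diagonal sum is precisely $R_2$. Hence $R_2 = X_n^2 - (n-1)$, which equals $P_2^{n-1}(X_n)$.

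For $l \geq 3$, I compute the product $X_n \cdot R_{l-1}$ by inserting an extra index $a_0 \in \{1, \ldots, n-1\}$ in front of the reduced word $(a_1\,n) \cdots (a_{l-1}\,n)$. The contributions with $a_0 \neq a_1$ reconstitute $R_l$ exactly. In the remaining contributions with $a_0 = a_1$, the leading pair cancels, $(a_0\,n)(a_1\,n) = \mathrm{id}$, leaving $(a_2\,n) \cdots (a_{l-1}\,n)$ summed over reduced words of length $l-2$, together with an extra index $a_0 \in \{1, \ldots, n-1\}$ which must satisfy $a_0 \neq a_2$ (because the original reducedness forced $a_1 \neq a_2$ and $a_1 = a_0$). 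This gives exactly $n-2$ choices of $a_0$ per reduced word of length $l-2$, so the total is $(n-2) R_{l-2}$. Rearranging,
\[
R_l = X_n R_{l-1} - (n-2) R_{l-2},
\]
which is exactly the recurrence defining $P_l^{n-1}(X_n)$ for $l \geq 3$. The induction closes.

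There is no real obstacle; the only subtle point worth flagging is that the special initial value $P_2^{n-1}(x) = x^2 - (n-1)$, which does \emph{not} come from the general recurrence (which would predict $x^2 - (n-2)$), matches the enumeration at $l = 2$ precisely because the phantom index $a_2$ is absent and so $a_0$ ranges freely over all of $\{1, \ldots, n-1\}$, producing $n-1$ options rather than $n-2$. The identity may be viewed as the standard instance of Chebyshev-type polynomials enumerating reduced walks on the Cayley graph of $S_n$ generated by the star transpositions $\{(k\,n) : 1 \leq k < n\}$.
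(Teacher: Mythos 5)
Your proof is correct and fills in the details of exactly the argument the paper gives in one line (``verified directly for $l=0,1,2$, the general $l$ follows by induction''). The observation about the ``phantom index'' explaining why the $l=2$ initial value is $x^2-(n-1)$ rather than what the generic recurrence would produce is precisely the point that makes the direct verification at $l=2$ necessary, so this is well flagged.
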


\begin{proof} The statement is verified directly for $l=0,1,2$, and the case of
general $l$ follows by induction.
\end{proof}

We mention the analogous identity in the context of random matrices (cf.\ \cite{Sodin2007,Sodin2015} and references therein). Let $H^{(N)}=(H(i,j))_{i,j=1}^N$ be an Hermitian $N \times N$ matrix such that 
\[ H(i, i) = 0~, \quad |H(i, j)| = 1 \quad (i \neq j)~.\]
Then
\begin{equation}\label{eq:cheb_rm} P_l^{N-1}(H)(i, j) = \sum_{\substack{1 \leq a_0 = i, a_1, \cdots, a_{l-1},a_l=j \leq N \\ a_j \neq a_{j+1}, a_{j+2}}} H(a_0, a_1) H(a_1, a_2) \cdots H(a_{l-1}, a_l)~.\end{equation}

\subsubsection{Mixed moments}

Fix $n \geq 1$. Define the collection of $k$-tuples of lists \begin{equation*}
\begin{split}
\Sigma(m_1, t_1; \cdots; m_k, t_k) = \{ (\ell^1,\cdots,\ell^k) : \sigma(\ell^1) \sigma(\ell^2) \cdots \sigma(\ell^k) = 1\}
\end{split}
\end{equation*}
where the $p$-th list takes the form \begin{equation*}
\ell^p \in  \{ (a^p_1,\cdots,a^p_{m_p}): 1 \leq a^p_i \leq n - t_p - p  \}~,
\end{equation*}
and to each list $\ell^p$ we associate a permutation
\begin{equation*}
\sigma(\ell^p) = ( a^p_1 \,\, n - t_p - p + 1)\cdots (a^p_{m_p}\,\, n - t_p - p + 1)~.
\end{equation*}
Similarly, we consider a subcollection 
\[ \Sigma'(m_1,t_1;\cdots;m_k,t_k) \subset \Sigma(m_1, t_1; \cdots; m_k, t_k) \]
of lists that satisfy
\begin{equation*}
a^p_{i_p} \ne a^p_{i_p+1} \mbox{ for all } 1\le p \le k \mbox{ and } 1 \le i_p \le m_p-1~.
\end{equation*}

\begin{lemma}\label{lem:chebyshev_g}
	We have \begin{equation}
	\frac{1}{n!}\tr\left( \prod_{p=1}^k P^{n-t_p-p}_{m_p}(X_{n-t_p-p+1})  \right)   = |\Sigma'(m_1, t_1; \cdots; m_k, t_k)|.
	\end{equation}
\end{lemma}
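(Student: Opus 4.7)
The plan is a direct computation combining Lemma \ref{l:cheb} with the character formula for the left regular representation. There is no real obstacle; the statement is essentially a bookkeeping identity once the ingredients are unpacked.

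First I would apply Lemma \ref{l:cheb}, with $n$ replaced by $n-t_p-p+1$ and $l$ replaced by $m_p$, to each factor. This gives
\[
P^{n-t_p-p}_{m_p}(X_{n-t_p-p+1}) \;=\; \sum_{\ell^p} \sigma(\ell^p)~,
\]
where the sum runs over all reduced lists $\ell^p = (a^p_1,\dots,a^p_{m_p})$ with $1\le a^p_i \le n-t_p-p$ and $a^p_i\ne a^p_{i+1}$, and $\sigma(\ell^p)$ is the corresponding product of transpositions. Observe that the upper bound $a^p_i < n-t_p-p+1$ produced by Lemma \ref{l:cheb} matches exactly the condition $a^p_i \le n-t_p-p$ used in the definition of $\Sigma'$.

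Next I would expand the product over $p$, which yields
\[
\prod_{p=1}^k P^{n-t_p-p}_{m_p}(X_{n-t_p-p+1})
\;=\; \sum_{(\ell^1,\dots,\ell^k)} \sigma(\ell^1)\sigma(\ell^2)\cdots\sigma(\ell^k)~,
\]
where the tuples $(\ell^1,\dots,\ell^k)$ range over all $k$-tuples of reduced lists, i.e.\ precisely the indexing set underlying $\Sigma'$ before imposing the relation $\sigma(\ell^1)\cdots\sigma(\ell^k)=1$.

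Finally I would use the basic fact that in the left regular representation, the trace of a permutation $\pi\in S_n$ (acting by left multiplication on the basis $\{\pi'\}_{\pi'\in S_n}$) equals $n!$ if $\pi=e$ and $0$ otherwise, because a basis vector $\pi'$ is fixed iff $\pi\pi'=\pi'$. Applying this termwise to the expansion above and dividing by $n!$ collapses each summand to the indicator $\mathbbm{1}\{\sigma(\ell^1)\cdots\sigma(\ell^k)=1\}$, so that
\[
\frac{1}{n!}\tr\!\left(\prod_{p=1}^k P^{n-t_p-p}_{m_p}(X_{n-t_p-p+1})\right)
\;=\; \bigl|\Sigma'(m_1,t_1;\dots;m_k,t_k)\bigr|~,
\]
as claimed. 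The only subtle point worth stating explicitly is the index matching in the first step (verifying $N-1 = n-t_p-p$ when $N = n-t_p-p+1$); everything else is formal.
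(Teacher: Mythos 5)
Your proof is correct and follows essentially the same route as the paper: apply Lemma \ref{l:cheb} (with the correct index substitution) to each factor, expand the product of the sums of transpositions, and use the fact that the trace of a permutation in the left regular representation is $n!\cdot\mathbbm{1}\{\pi = 1\}$ to reduce to counting solutions in $\Sigma'$.
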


\begin{proof} According to Lemma~\ref{l:cheb}
\[ P^{n-t_p-p}_{m_p}(X_{n-t_p-p+1})=
\sum_{\substack{1 \leq a_1, \cdots, a_{m_p} < n-t_p-p+1 \\ a_j \neq a_{j+1}}} (a_1 \,\,n-t_p-p+1) \cdots (a_{m_p}\,\,n-t_p-p+1)~; \]
multiply these identities, take the trace in the left regular representation, and
recall that for a permutation $\pi$
\[ \tr \pi = \begin{cases} n!~,&\pi = 1 \\
0~, &\pi \neq 1~.\end{cases}\]
\end{proof}

An analogue of Lemma~\ref{lem:chebyshev_g} in random matrix context
is as follows. Let $H$ be a random matrix such that
\begin{equation*}
H(i,j) \sim \begin{cases} \mathrm{unif}(S^1)~, & i \ne j~, \\ 
0~, & i = j~. \end{cases}
\end{equation*}
Then
\begin{equation}\label{eq:MMM}
\mathbb{E} \prod_{p=1}^k \tr P_{m_p}^{N-t_k-1} (H^{(N-t_k)}) = |\Sigma'_\text{RM}(m_1, t_1;
\cdots; m_k, t_k)|~,
\end{equation}
where $\Sigma'_\text{RM}(m_1, t_1; \cdots; m_k, t_k)$ is the collection of $k$-tuples of paths of the form 
\begin{equation*}
\gamma = \{ \gamma_1,\cdots,\gamma_k \}~, \quad \gamma_p = a^p_0 a^p_1 \cdots a^p_{m_p}~, \quad 1 \leq a^p_j \leq N - t_p
\end{equation*}
such that $a_{m_p}^p = a_0^p$, $a_j^p \neq a_{j+1}^p, a_{j+2}^p$ and 
for any $a \ne a'$, \begin{equation*}
	|\{ (p,j) : a^p_j = a, a^p_{j+1} = a' \}| = |\{ (p,j) : a^p_j = a', a^p_{j+1} = a \}|~.
	\end{equation*}

The elements of $\Sigma'_\text{RM}$  can be classified by the family of $k$-diagrams (see Definitions~\ref{def:diag} and \ref{def:kdiag}), which are graph-theoretic constructs retaining only the essential topological information of a given $k$-tuple of paths. In the next section, we prove a similar classification for elements of $\Sigma'$. 

\subsection{Some combinatorial constructions}

Our goal is to group the elements of $\Sigma'(m_1,t_1; \cdots; m_k, t_k)$ into equivalence
classes. The
idea is as follows. Assuming $k = 1$ and $t_1 = 0$, suppose that we are given a solution to the equation
\[ (j_1\, n) (j_2\,n) \cdots (j_k\, n) = 1~. \]
Consider the sequence of permutations
\[ \pi_0 = 1, \quad \pi_1 = (j_1 n)~, \quad \pi_2 = (j_1 n) (j_2 n)~, \quad \pi_k = (j_1 n) (j_2 n) \cdots (j_k n) = 1~.\]
We can decompose any permutation $\pi$ as a product of cycles
\[ \pi =c^* \prod_r c_{r}~, \]
where $c^*$ is the cycle containing $n$. When we pass from $i$ to $i+1$, one
of the following may happen. The first, topologically trivial, possibility is that the cycle structure does not change, and only $c^*$ increases or decreases by one. The second
possibility is that either $c^*$ splits into two cycles, or a cycle $c_r$ merges with $c^*$.
The equivalence classes which we define will keep track of the splitting and
merging of the cycles of $\pi_i$.

\subsubsection{Associating paths to lists of transpositions}

To define the equivalence classes, we first associate a path to any list of transpositions  $(a_1 n), (a_2 n), \cdots, (a_l n)$.

\begin{definition}[Associating a path]\label{def:assoc}
	For each $1 \le i \le n$, we prepare two vertices $i(w)$ and $i(b)$ (called `white' and `black' vertices of index $i$, respectively). Given a list $(a_1,\cdots,a_l)$, we construct a path of length $2l$ on this collection of $2n$ vertices, inductively on $l$. During the procedure, if we happen to add an arrow between two vertices where an arrow of the opposite direction had previously drawn, we consider them as being matched and invisible during the next steps of the procedure. 
	\begin{itemize}
		\item In the case $l = 1$, we draw an arrow from $n(w)$ to $a_1(b)$, and then from $a_1(b)$ to $a_1(w)$. This length 2 path is denoted by $P(a_1)$. 
		\item Given a path $P(a_1,\cdots,a_{l-1})$ associated with $(a_1,\cdots,a_{l-1})$, first draw an arrow from $a_{l-1}(w)$ (which is the last vertex of $P(a_1,\cdots,a_{l-1})$) to $a_l(b)$. It can be shown inductively that there is at most one unmatched incoming arrow to $a_{l}(b)$, not including the one which we just drew. If such an arrow does not exist, simply add an arrow to $a_l(w)$. Otherwise, add an arrow from $a_l(b)$ to match the unique incoming arrow to $a_l(b)$. This defines $P(a_1,\cdots,a_l)$. 
	\end{itemize}
\end{definition}

\begin{figure}
	\includegraphics[scale=1.5]{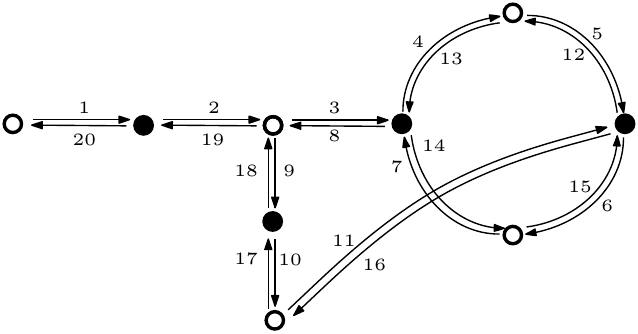}
	\centering
	\caption{The path corresponding to the list $(a,b,c,b,d,c,b,c,d,a)$. In the random matrix setting such a path (with coloring ignored) would correspond to $(\alpha, \beta, \gamma, \delta, \epsilon, \zeta, \eta, \delta, \gamma, \theta, \iota, \zeta, \epsilon, \delta, \eta, \zeta, \iota,\theta,\gamma,\beta,\alpha)$.}\label{fig:coxeter} 
\end{figure}

\begin{example}
Consider the list $(a,b,c,b,d,c,b,c,d,a)$, where $a,b,c,d$ are four distinct letters in $\{1,...,n-1\}$. We have the equality
	\begin{equation*}
	(a n)(b n)(c n)(b n)(d n) (c n)(b n)(c n)(d n)(a n)=1
	\end{equation*} in $S_n$. The associated path $P$ in $\Sigma'(20)$ is illustrated in Figure \ref{fig:coxeter}.
\end{example}

More generally, given $\overline{t} = (t_1,\cdots,t_k)$ and a $k$-tuple of lists $(\ell^1,\cdots,\ell^k)$, where $\ell^p$ is a list of transpositions $(a_j^p\,\, n-t_p - p +1)$ with $a_j^p < n-t_p - p +1$,  we associate a $k$-tuple of paths, as follows. 
First, we obtain the path $P_1$ corresponding to $\ell^1$ as in Definition~\ref{def:assoc}. 
On the $p$-th step, we associate a path $P_p$ to $\ell^p$ using the same construction,
except that this path starts on the white vertex $(n-t_p-p+1)(w)$, and
we allow an arrow from a path to be  matched with arrows from itself 
or any of the previous ones $P_1, \cdots, P_{p-1}$.

\begin{example}
	Let $k = 2$ and $\overline{t} = (0,0)$. Consider the relation
	\begin{equation*}
	(a \,n )(b\, n)(a\, n)(b \, n-1)(a \, n-1)(b \, n-1)=1
	\end{equation*}
	for some $a \ne b$,  $a,b\in \{1, \cdots, n-2\}$. The corresponding pair of paths is depicted in Figure \ref{fig:coxeter2}.
	\end{example}

\begin{figure}
	\includegraphics[scale=1.5]{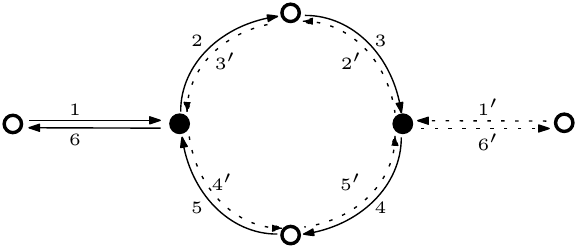}
	\centering
	\caption{Two paths corresponding to the pair of lists $(a,b,a),(b,a,b)$.  In the random matrix setting, this pair would correspond to $(\alpha,\beta,\gamma,\delta,\epsilon,\beta,\alpha), (\zeta,\delta,\gamma,\beta,\epsilon, \delta,\zeta)$.
}\label{fig:coxeter2}
\end{figure}

The main property of the construction is that the unmatched paths encode the
cycle structure of the product of transpositions. By construction, the unmatched 
part of the graph corresponding to a $k$-tuple of lists has vertices of degree $1$ 
and $2$. Therefore it is a union of cycles and intervals (`threads').  

\begin{lemma}\label{l:defworks} Let $(\ell^1, \cdots, \ell^k)$ be a $k$-tuple of lists, 
$\ell^p = (a_1^p, \cdots, a_{m_p}^{p})$,
and let $(P_1, \cdots, P_k)$ be associated $k$-tuple of paths. Let $P^*$
be the unmatched part of $(P_1, \cdots, P_k)$. Then
\begin{enumerate}
	\item the cycles of $P^*$ are in one-to-one correspondence with the cycles of
the product
\[\begin{split}
&(a_1^1 \,\, n-t_1) (a_2^1 \,\, n-t_1) \cdots(a_{m_1}^1 \,\, n-t_1) \cdots\\
&\quad (a_1^p \,\, n-t_p-p+1) (a_2^p \,\, n-t_p-p+1) \cdots(a_{m_p}^p \,\, n-t_p-p+1)\cdots\\
&\quad(a_1^k \,\, n-t_k-k+1) (a_2^k  \,\, n-t_k-k+1) \cdots(a_{m_k}^k \,\, n-t_k-k+1)
\end{split}\]
not containing the vertices $n-t_p -p+1$, and the intervals (`threads') are
in one to one correspondence with the cycles containing the special vertices.
	\item The collection $\Sigma(m_1, t_1; \cdots; m_k, t_k)$ consists precisely of lists for which $P^* = \varnothing$.
	
	\item The collection $\Sigma'(m_1, t_1; \cdots; m_k, t_k)$ consists precisely of lists 
for which $P^* = \varnothing$, and whose associated paths are non-backtracking; that is, two consecutive arrows are not matched with each other. 
\end{enumerate}
\end{lemma}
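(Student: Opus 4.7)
The plan is to prove all three parts simultaneously by induction on the total number of transpositions $M = \sum_p m_p$, tracking in parallel two objects as each transposition is appended: the partial product $\pi_i$ formed from the first $i$ transpositions (in the canonical order across lists), and the unmatched subgraph $P^*_i$ after $i$ steps of Definition~\ref{def:assoc}. The invariant I would maintain is that $P^*_i$ decomposes as a disjoint union of closed cycles together with exactly one open \emph{thread} per already-started list $\ell^p$, where the thread runs from the starting vertex $(n{-}t_p{-}p{+}1)(w)$ to the current position of $P_p$; moreover the closed cycles of $P^*_i$ are in bijection with the cycles of $\pi_i$ of length $\ge 2$ that avoid all pivot letters, and the threads are in bijection with the orbits of $\pi_i$ containing the respective pivots (degenerate ``empty'' threads corresponding to fixed-point orbits). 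A first small observation is that the ``at most one unmatched incoming arrow'' clause invoked in Definition~\ref{def:assoc} is automatic under this invariant: the unmatched subgraph is a union of paths and cycles, so every vertex has unmatched in-degree at most one.

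For part~(1), the inductive step is a case analysis on the effect of the new transposition $(a\, N)$ with $N = n{-}t_p{-}p{+}1$. Right-multiplication by $(a\, N)$ either \emph{merges} the two cycles of $\pi_{i-1}$ containing $a$ and $N$ (when they are distinct) or \emph{splits} the single cycle containing both (when they coincide). The two branches of Definition~\ref{def:assoc} mirror this dichotomy exactly: if $a(b)$ has no prior unmatched incoming arrow, the path extends by two fresh arrows through $a(b)$ to $a(w)$ and the open thread of $P_p$ grows, which corresponds to a merge; if $a(b)$ already has an unmatched incoming arrow, the newly drawn shortcut from $a(b)$ matches it and pinches off a closed cycle from the current thread, which corresponds to a split. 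Updating the bijection explicitly in each case confirms that the invariant is preserved.

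Parts~(2) and (3) then follow easily from the invariant. For part~(2), $P^*_M = \varnothing$ is equivalent to the absence of any closed cycle together with every thread being degenerate, which by the invariant translates to $\pi_M$ having no nontrivial cycles, i.e.\ $\pi_M = 1$; this recovers the defining relation of $\Sigma(m_1, t_1; \cdots; m_k, t_k)$. For part~(3), within a fixed list $\ell^p$ the condition $a^p_j = a^p_{j+1}$ produces at step $j{+}1$ a newly drawn arrow $a^p_j(w) \to a^p_j(b)$ which is the direct reverse of the arrow $a^p_j(b) \to a^p_j(w)$ drawn at step $j$, so these two \emph{consecutive} arrows match; conversely, two consecutive arrows within a single $P_p$ can match only through this mechanism, since the two arrows drawn at a single step lie on different edges. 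Since arrows from distinct lists $P_p, P_{p'}$ are never consecutive, non-backtracking of the whole family is equivalent to $a^p_j \ne a^p_{j+1}$ for every $p, j$.

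The main difficulty I anticipate is the split case of part~(1): one must verify that the closed cycle pinched off from the thread really corresponds to the new cycle produced in $\pi_i$, and in particular that the unmatched incoming arrow at $a(b)$ belongs to the current thread of $P_p$ rather than to some already-closed component. This holds because edges in a closed cycle of $P^*$ are completely saturated and cannot supply the unique unmatched incoming arrow at $a(b)$, so that arrow must lie on the live thread. A secondary point, to be handled with care but not hard, is to identify each cycle of $P^*_i$ on the doubled vertex set with the cyclic sequence of indices it visits (ignoring the $(w)/(b)$ decoration) and to check that this sequence coincides with the matching cycle of $\pi_i$.
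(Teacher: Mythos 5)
Your overall strategy — induction on the number of transpositions, maintaining a bijection between the components of the unmatched subgraph $P^*_i$ and the cycles of the partial product $\pi_i$ — is the same one the paper uses (the paper's write-up merely says ``one verifies that a step of Definition~\ref{def:assoc} exactly corresponds to multiplication of a permutation by a transposition from the right'' and leaves the case analysis implicit). However, your inductive step has a genuine gap: the two branches of Definition~\ref{def:assoc} do \emph{not} correspond to merge and split. You claim that when $a(b)$ already has an unmatched incoming arrow, this always pinches off a cycle from the live thread (a split), and you argue that the incoming arrow cannot come from an already-closed cycle because ``edges in a closed cycle of $P^*$ are completely saturated.'' This reasoning is backwards: $P^*$ is by definition the \emph{unmatched} subgraph, so a closed cycle of $P^*$ consists precisely of unmatched arrows, and a black vertex $a(b)$ lying on such a cycle \emph{does} have an unmatched incoming arrow. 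When the live thread subsequently reaches such an $a(b)$, Definition~\ref{def:assoc} matches that incoming arrow; this removes one arrow from the closed cycle, turning it into an interval that splices into the current thread. That is a \emph{merge} of the pivot's cycle with another nontrivial cycle, not a split.

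Concretely, take $k=1$, $n=10$, and the list $(1,2,1,3,2)$, so $N=10$. After processing $(1,2,1)$ one has $\pi_3=(1\,2)$ and $P^*_3$ is the closed cycle $1(b)\!\to\!1(w)\!\to\!2(b)\!\to\!2(w)\!\to\!1(b)$ with no thread ($10$ is a fixed point). Processing $3$ creates the thread $10(w)\!\to\!3(b)\!\to\!3(w)$, so $\pi_4=(1\,2)(3\,10)$. Processing the final $2$ draws $3(w)\!\to\!2(b)$; the vertex $2(b)$ has the unmatched incoming arrow $1(w)\!\to\!2(b)$, which lies on the closed cycle, not on the live thread. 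The construction matches it, and $P^*_5$ becomes the single thread $10(w)\!\to\!3(b)\!\to\!3(w)\!\to\!2(b)\!\to\!2(w)\!\to\!1(b)\!\to\!1(w)$, while $\pi_5=(1\,2\,3\,10)$ is a single $4$-cycle. This step used the ``has prior unmatched incoming'' branch, yet it is a merge.

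So the correct induction step has three cases rather than two: (i) $a(b)$ fresh, i.e.\ $a$ is a fixed point of $\pi_{i-1}$ — merge with a singleton; (ii) the unmatched incoming arrow at $a(b)$ lies on the current thread (equivalently, $a$ is in the same $\pi_{i-1}$-cycle as the pivot) — split; (iii) the unmatched incoming arrow lies on a closed cycle or on another thread (for $k>1$) — merge of two nontrivial components. Your invariant also needs a small adjustment for $k>1$: it is not ``one thread per already-started list,'' because two pivots may come to lie in the same $\pi_i$-cycle, in which case their threads coalesce into a single thread, consistent with the statement's wording ``the intervals are in one-to-one correspondence with the cycles containing the special vertices.'' Once the three cases are tracked, your invariant (suitably corrected) is preserved, and parts (2) and (3) follow as you describe.
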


\begin{proof}[Proof of Lemma~\ref{l:defworks}]
The proof of the first item  proceeds by induction in the number of transpositions
in the lists. The path corresponding to one transposition $(j \, n)$ is $n(w) \to j(b) \to j(w)$. 
It is unmatched, and has one thread, which corresponds to the unique cycle of the transposition  $(j \, n)$. This provides the base of induction. For the induction step,
one verifies that a step of Definition~\ref{def:assoc} exactly corresponds to multiplication
of a permutation by a transposition from the right.

The second item follows from the first one: a transposition is the identity
if and only if it has no cycles.

Regarding the third item, one direction is clear: if there is a pair of adjacent letters that coincide, the associated path should have a backtracking arrow. The other implication can be easily proved by an induction on the length.
\end{proof}

\subsubsection{Diagrams}

We introduce the notion of a diagram. We first consider the case $k = 1$ and $t_1 = 0$.  We copy the following definition from \cite[Definition II.1.3]{FS}. 

\begin{definition}[Diagram]\label{def:diag}
	A diagram consists of a graph $G = (V,E)$ together with a circuit $p=v_0 v_1\cdots v_r v_0$ on $G$, satisfying the following conditions. \begin{itemize}
		\item The circuit $p$ is non-backtracking (i.e. $v_j$ is not the reverse of $v_{j+1}$ for $0 \le j \le r$ with $v_{r+1} = v_0$).
		\item For each edge $(v,v') \in E$, \begin{equation*}
		|\{ j : v_j = v, v_{j+1} = v' \}| = | \{ j: v_j = v', v_{j+1} = v \}| = 1~.
		\end{equation*}
		\item The degree of each $v_j$ is 3, except for that of the initial vertex $v_0$, which is 1.
	\end{itemize}
	If there is a function $\overline{l}:E \rightarrow \{ 0,1,2,\cdots \}$ which assigns a length to each edge of $G$, we call $G$ a metric diagram.
\end{definition}

In the definition below, we illustrate the steps to `contract' a given path in $\Sigma'(2m,0)$ to a metric diagram. 

\begin{figure}
	\includegraphics[scale=1.5]{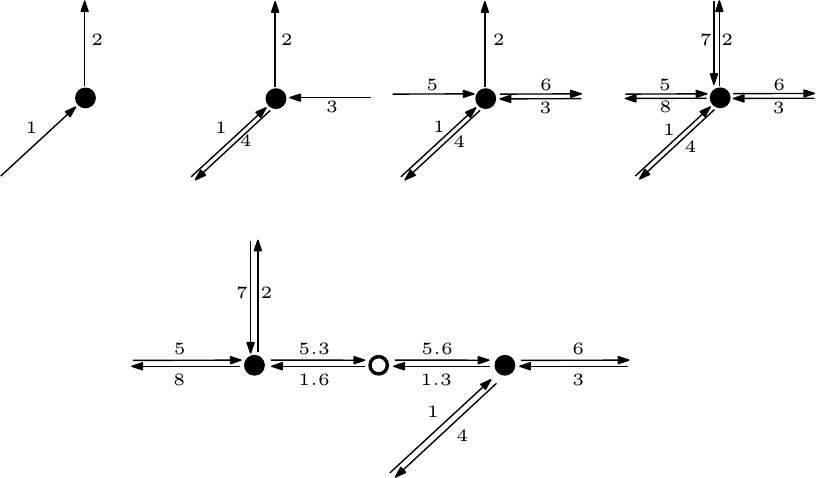}
	\centering
	\caption{Appearance of a high-degree vertex (above), reducing the degree to 3 (below).}\label{fig:resolve} 
\end{figure}

\begin{definition}[The contraction map]\label{def:contraction}
	Take a path $P(a_1,\cdots,a_{2m})  \in \Sigma'(2m,0)$. At each stage, there is a natural way to modify the path, which becomes the circuit in the resulting diagram.
	\begin{enumerate}
		\item Collapse each pair of matched arrows into a single undirected edge. 
		\item(Regularizing the initial vertex) If the vertex  $n(w)$ has degree greater than 1, pick a letter $1 \le a < n$ which does not appear in $\{a_1,\cdots,a_{2m}\}$ and draw an edge between $n(w)$ and $a(b)$, and also between $a(b)$ and $a(w)$. 
		\item(Removing multiplicity of vertices) Take a black vertex $a(b)$ involved in the path, and take the smallest index $1\le j$ for which all arrows (nonempty set) connected to $a(b)$ are matched, in the partial path $P(a_1,\cdots,a_j)$. If there are additional occurrences of the letter $a$ after $i > j$, prepare a new black vertex $a'(b)$ and replace each occurrences of $a(b)$ by $a'(b)$ after $i > j$. Here $1 \leq a' < n-t_1$ is some letter  which does not appear in $\{a_1,\cdots,a_{2m}\}$. We repeat this procedure until we do not need to prepare any new black vertices. 
		
		Then, we apply the above procedure for all white vertices appearing in the path.
		\item(Reducing the degree) Assume that a black vertex has degree exceeding 3. It is not hard to see that such a high-degree vertex should be obtained by a repetition of the process depicted in Figure \ref{fig:resolve} (above). In view of this, we only consider the case when degree is 4. For this, we prepare a pair of new black and white vertices and modify the path as shown in Figure \ref{fig:resolve} (below). 
		
		As before, we repeat the procedure for all white vertices of degree exceeding 3. 
		\item Collapse all vertices of degree 2, and define $l(e) = 0$ if $e$ is an edge created in the above procedure. Otherwise, we set $l(e)$ as the number of collapsed vertices on that edge plus one. 
	\end{enumerate}
\end{definition}

We note that while the diagrams in \cite{FS} did not have colored vertices, the coloring can be recovered from the circuit in a unique way. 

\begin{example}
	The path $P(a,b,c,b,d,c,b,c,d,a)$ depicted in Figure \ref{fig:coxeter} is associated with the metric diagram in Figure \ref{fig:diagram}.
\end{example}

\begin{figure}
	\includegraphics[scale=1.5]{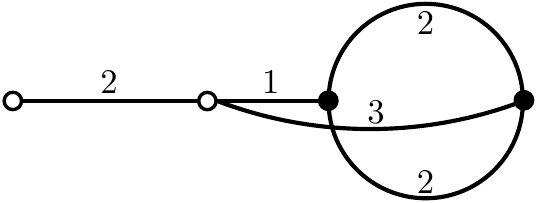}
	\centering
	\caption{The metric diagram associated to the path $P(a,b,c,b,d,c,b,c,d,a)$, without the circuit}\label{fig:diagram} 
\end{figure}

In \cite[Section II.2]{FS}, an automaton which generates all possible diagrams is presented. To explain the ideas, let a particle travel through the circuit and consider its trail, erasing the parts which have been passed twice (in opposite directions). Then, at each moment of time, the trail consists of a thread (starting at the initial vertex) and a number of loops. Therefore, there are two types of transitions that the trail goes through, `creation' of a new loop and `annihilation' of an existing loop. We let $s>0$ be the number of transitions associated with a diagram, which must be an even integer.

\begin{lemma}[see Claim II.2.1 of \cite{FS}]\label{lem:counting_diagrams}
	If a diagram is generated by $s$ transitions, it has $3s-1$ edges and $2s$ vertices. Denoting $D(s)$ as the number of such diagrams, we have estimates \begin{equation}
	(s/C)^s \le D(s) \le C^{s-1} s^s~,
	\end{equation}
	for some absolute constant $C > 0$.
\end{lemma}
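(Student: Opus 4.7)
The plan has three parts: first establish the structural identities $|V|=2s$ and $|E|=3s-1$, then bound $D(s)$ from above by counting choices in the automaton, and finally exhibit a concrete family achieving the lower bound.

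For the structural count, I would argue that each transition of the automaton introduces exactly two new vertices to the diagram: the branching vertex where the trail leaves (or returns to) the existing structure, together with its counterpart. Since there are $s$ transitions, this yields $|V|=2s$. By the very definition of a diagram (Definition~\ref{def:diag}), the initial vertex has degree $1$ and all remaining $2s-1$ vertices have degree $3$, so the handshake lemma gives
\[
2|E| \;=\; 1 + 3(2s-1) \;=\; 6s-2,
\]
hence $|E|=3s-1$.

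For the upper bound, I would encode a diagram by the sequence of automaton moves together with the choice data at each step. At each of the $s$ transitions we must record whether the step is a creation or an annihilation; a creation inserts a new loop at the current position and has only boundedly many local configurations, while an annihilation must specify which of the currently open loops to close. Since the number of simultaneously open loops never exceeds $s/2$, each annihilation contributes at most $s$ options. Multiplying over the $s$ transitions and absorbing constants gives $D(s)\le C^{s-1}s^s$.

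For the lower bound, I would construct an explicit family of diagrams by considering automaton trajectories that interleave creations and annihilations so as to maintain $\Omega(s)$ simultaneously open loops during a positive fraction of the process. At each such annihilation step there are $\Omega(s)$ genuinely distinct choices of which loop to close, producing at least $(s/C)^s$ choice sequences. The main obstacle I expect is verifying that different choice sequences yield non-isomorphic diagrams and not merely relabelings: this requires identifying a distinguishing invariant, most naturally the chord-diagram recording how creations are paired with annihilations, together with the order in which they occur along the circuit. Once this invariant is shown to be injective on the constructed family, the lower bound follows.
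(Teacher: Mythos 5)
The paper does not actually prove this lemma; it is attributed to Claim II.2.1 of \cite{FS}, and the text simply cites that reference. So the comparison is really between your attempt and the argument in \cite{FS}.

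Your structural count is fine: two new vertices per transition gives $|V|=2s$, and the handshake lemma with the degree constraints of Definition~\ref{def:diag} gives $|E|=3s-1$.

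The counting part, however, has an internal inconsistency that reveals a gap in your model of the automaton. You allocate $O(1)$ choices to each of the $s/2$ creations and $O(s)$ choices to each of the $s/2$ annihilations. Taken at face value, this yields
\[
D(s) \;\le\; \binom{s}{s/2}\, C^{s/2}\, s^{s/2} \;=\; O\bigl(C_1^s\, s^{s/2}\bigr),
\]
which is \emph{not} the same as $C^{s-1}s^s$ --- it is smaller by a factor of roughly $s^{s/2}$. If this were correct it would be incompatible with the lower bound $(s/C)^s$ in the very same lemma, since $(s/C)^s \gg C_1^s s^{s/2}$ for large $s$. The same miscount undermines your lower bound: with $\Omega(s)$ choices at only the $s/2$ annihilation steps, the product you can construct is $(s/C)^{s/2}$, not $(s/C)^s$. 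In other words, your estimate of the freedom in the automaton is off by a square root, consistently in both directions.

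The missing ingredient is that \emph{creations also carry $\Theta(s)$ choices}, not $O(1)$. When a new loop is spawned, the trail at that moment is a tree-like structure of size $\Theta(s)$ (with respect to the discrete, unmetrized skeleton), and the new loop must reattach to it somewhere; the combinatorial choice of attachment point, not merely the local configuration at the particle, is part of the data of the (non-metric) diagram. Once each of the $s$ transitions is seen to contribute $\Theta(s)$ choices, both the upper bound $C^{s-1}s^s$ and the lower bound $(s/C)^s$ come out with the correct exponent. Your proposed invariant for distinguishing diagrams in the lower-bound family (the pairing of creations with annihilations, ordered along the circuit) is on the right track but would also need to record these attachment choices to be injective.
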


At this point, we make the following observation (cf.\ Claim II.1.4 of \cite{FS}).

\begin{lemma}\label{lem:degree_of_freedom} Given a metric diagram, there are at most \begin{equation*}
	(n-1)^{m-s/2} = (n-1)^{\sum_{e\in E} l(e) /2 - s/2}
	\end{equation*}
	elements of $\Sigma'(2m,0)$ corresponding to it. If $l(e) \ge 1$ for every $e \in E$ then there are exactly \begin{equation*}
	(n-1)(n-2)\cdots (n-(m-s/2))
	\end{equation*}
	such elements. In particular, if in addition we have $m-s/2 = o(n^{1/2})$ then the number is \begin{equation*}
	n^{m-s/2}(1+o(1)).
	\end{equation*}
\end{lemma}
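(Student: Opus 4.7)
The plan is to count, for each metric diagram, the number of its preimages under the contraction map by walking through the circuit of the diagram in order and recording the degrees of freedom in assigning letters to vertices.

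The key identity is $\sum_{e \in E} l(e) = 2m$. This follows from the accounting: each transposition $(a_j\, n)$ contributes two arrows to the associated path (Definition~\ref{def:assoc}), so there are $4m$ arrows in total; these pair up into $2m$ matched undirected edges, whose lengths sum to $\sum_e l(e) = 2m$. Hence $\sum_e l(e)/2 - s/2 = m - s/2$, reconciling the two formulas in the statement.

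To reconstruct the list $(a_1, \ldots, a_{2m})$ from a metric diagram, I would traverse the circuit in order (including the intermediate vertices along edges of length $l(e) \geq 1$), assigning a letter from $\{1, \ldots, n-1\}$ to each new vertex encountered, with the initial vertex fixed as $n(w)$. The crucial observation is that the letter graph is bipartite: each letter $a \neq n$ that is used contributes a black vertex $a(b)$ and a white vertex $a(w)$, and these are visited in immediately consecutive steps of the path. Thus each genuinely new letter introduces a pair of new vertices, while "revisits" (forced by the diagram topology) introduce no choice. Using the automaton picture from Lemma~\ref{lem:counting_diagrams}, which alternates $s$ loop-creation and $s$ loop-annihilation transitions, one concludes that the number of new-letter events along the traversal equals $m - s/2$, giving the upper bound $(n-1)^{m-s/2}$.

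When $l(e) \geq 1$ for every edge, no artificial vertex is introduced by Steps~2 or~4 of the contraction (Definition~\ref{def:contraction}), and the separation performed in Step~3 ensures that distinct vertices of the diagram, together with the intermediate vertices, all carry distinct letters. Choosing the $m - s/2$ letters one at a time produces the falling factorial $(n-1)(n-2)\cdots(n - (m - s/2))$ exactly. Finally, when $m - s/2 = o(n^{1/2})$, the standard expansion $\prod_{j=1}^{m - s/2} (1 - j/n) = 1 + o(1)$ yields the asymptotic $n^{m - s/2}(1 + o(1))$.

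The main obstacle is the careful matching of diagram structure to new-letter events: one must prove that the topological invariant $s$ (half the number of creation/annihilation transitions) really does control the number of revisits forced during reconstruction. I expect the argument to parallel the vertex-counting in the random matrix case of \cite{FS,Sodin2015}, with the extra subtlety of tracking the two colors (black/white) in the permutation-group setting and of handling the artificially added edges (those with $l(e) = 0$) correctly so that they contribute no spurious degrees of freedom.
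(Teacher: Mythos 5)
Your route differs from the paper's. You propose to count degrees of freedom by walking the circuit of the diagram in order, noting that each genuinely new letter contributes a new black/white pair of path-vertices, and then asserting that the topological parameter $s$ forces exactly $m - s/2$ new-letter events. The paper instead invokes the group-theoretic Lemma~\ref{l:cox}: any solution of $(a_1\,n)\cdots(a_{2m}\,n)=1$ reduces to the trivial word using the listed relations, each use of a Coxeter relation lowers by one the number of free letters (which would be $m$ if only trivial relations were used), and the number of Coxeter relations used is read off as the number $s/2$ of creation--annihilation pairs in the diagram. The two viewpoints are cousins --- the circuit traversal in effect discovers where the Coxeter relations sit --- but yours is closer in spirit to the vertex-count of \cite{FS} for matrices, whereas the paper's is a direct word-reduction argument.

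However, there is a genuine gap precisely where you flag it. Your upper bound rests entirely on the assertion ``one concludes that the number of new-letter events along the traversal equals $m - s/2$,'' and you never prove it. This is the whole content of the lemma, not a side-step; the gesture at the automaton of Lemma~\ref{lem:counting_diagrams} is not an argument. (A clean way to fill it, consistent with your vertex-counting plan, is a direct tally: a path whose contraction is $\mathcal{D}$ with $l(e)\ge1$ everywhere and no Step~3 splitting has $2s$ degree-$\ge 3$ vertices from $\mathcal{D}$ plus $\sum_e (l(e)-1) = 2m-(3s-1)$ intermediate degree-2 vertices, so $2m-s+1$ vertices in all; writing this as $2B+1$ with $B$ black letters and $B+1$ whites gives $B=m-s/2$.) Two smaller slips: the automaton has $s/2$ creations and $s/2$ annihilations, $s$ transitions total (you say ``$s$ loop-creation and $s$ loop-annihilation''); and $a(b)$ and $a(w)$ are visited in immediately consecutive steps only on the \emph{first} visit to the letter $a$, not on revisits. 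On the plus side, your explicit verification of $\sum_e l(e) = 2m$ supplies a step the paper leaves implicit. For the exact count when $l(e)\ge1$, your reasoning that Step~3 ``ensures... all carry distinct letters'' mirrors the paper's pigeonhole-plus-degree-$\ge4$ argument, but neither version fully rules out a path that reuses a letter $c$ in two \emph{disjoint} segments (so that Step~3 splits $c(b)$ into two degree-2 vertices and Step~4 is never triggered); this would be worth checking carefully if you write this up.
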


Before the proof, we quote the following known fact (which can be also proved
by the graphical methods developed here):
\begin{lemma}\label{l:cox}
The symmetric group $S_n$ is generated by the transpositions 
$(1\, n)$, $(2\, n)$, \dots, $(n-1\, n)$,
with the trivial relations $(c\,n) (c\,n) = 1$,  the commutation relations 
	\begin{equation*}
	\begin{split}
	(c\,n) (b\,n) (a_1 \,n)\cdots (a_m \,n) (b\,n) = (b\,n) (a_1 \,n)\cdots (a_m \,n) (b\,n) (c\,n)~, \quad c \notin \{ b,a_1,\cdots,a_m \}~,
	\end{split}
	\end{equation*} 
and the Coxeter relations 
$$(a\,n)(b\,n)(a\,n)(b\,n)(a\,n)(b\,n) =1~.$$ 
\end{lemma}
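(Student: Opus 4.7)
My plan is first to dispatch the straightforward direction: the transpositions $s_c := (c\,n)$ generate $S_n$ since $(i\,j) = s_i s_j s_i$ for distinct $i, j < n$, and all transpositions generate $S_n$; and each of the three families of relations holds in $S_n$. Indeed, $s_c^2 = 1$ is immediate; the inner word $s_b s_{a_1} \cdots s_{a_m} s_b$ in the commutation relation permutes $\{a_1, \ldots, a_m, b\}$ while fixing both $c$ and $n$, and hence commutes with $s_c$; and $s_a s_b = (a\,n\,b)$ is a $3$-cycle, so $(s_a s_b)^3 = 1$.

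The substance is completeness of the relations. I intend to prove this by induction on the length $l$ of a word $w = s_{a_1} \cdots s_{a_l}$ equal to the identity in $S_n$, showing that $w$ rewrites to the empty word using only the listed relations. The case $l = 0$ is vacuous. For $l \ge 2$, if two adjacent letters coincide, the trivial relation shortens $w$ and the inductive hypothesis closes the step. Otherwise $w$ is non-backtracking, and since $w = 1$ the associated path $P(a_1, \ldots, a_l)$ of Definition~\ref{def:assoc} satisfies $P^* = \varnothing$ by Lemma~\ref{l:defworks}, so every arrow of $P$ is matched.

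To locate a length-reducing relation, I would examine the first step $i$ at which the construction matches an incoming arrow to $a_i(b)$ with one created at an earlier step $j < i$. Minimality of $i$ forces $a_j = a_i =: b$ together with pairwise distinctness of the intermediate letters $c_r := a_{j+r}$ for $1 \le r \le m := i-j-1$. Hence the substring $s_b s_{c_1} \cdots s_{c_m} s_b$ is precisely the type of block to which the commutation relation applies. The subsequent letter $a_{i+1}$, if it exists, cannot lie in $\{c_1, \ldots, c_m\}$ (else a matching would already have happened at a step prior to $i$); if $a_{i+1}=b$ with $m=1$, the resulting Coxeter pattern $s_b s_{c_1} s_b s_{c_1}$ reduces to $s_{c_1} s_b$ via $(s_b s_{c_1})^3 = 1$, shortening $w$ by $2$. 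Otherwise, setting $c := a_{i+1} \notin \{b, c_1, \ldots, c_m\}$, the commutation relation slides $s_c$ leftward past the block $s_b s_{c_1} \cdots s_{c_m} s_b$, and iterating such slides in combination with the global matching constraint $P^* = \varnothing$ eventually produces an adjacent occurrence $s_c s_c$, after which the trivial relation shortens $w$.

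The principal obstacle is establishing termination of the commutation slides — i.e., ruling out an infinite cycle of rewrites that never produces a cancellation. I expect this to follow from a monovariant argument: each commutation strictly decreases the number of unmatched arrows in the prefix of the incremental construction of $P$ lying to the left of the sliding letter, so the process must conclude in finitely many steps with an adjacent repetition to cancel, whereupon the inductive hypothesis completes the proof.
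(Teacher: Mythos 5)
The paper does not give its own proof of this lemma: it is quoted as a known fact, with a remark that it can also be derived from the graphical constructions developed later, which is the route you take. Your easy direction is essentially fine, though note that both the paper's statement of the commutation relation and your justification that the inner word fixes $n$ tacitly assume $b\notin\{a_1,\dots,a_m\}$; without that, $(b\,n)(a_1\,n)\cdots(a_m\,n)(b\,n)$ need not fix $n$ and the relation can fail in $S_n$.

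The completeness direction has a genuine gap, concentrated in the claim that $a_{i+1}\notin\{c_1,\dots,c_m\}$. The parenthetical justification is incorrect: $a_{i+1}=c_r$ would create a matching at step $i+1$, not at a step prior to $i$. Concretely, take distinct letters $b,c_1,c_2\in\{1,\dots,n-1\}$ and the word
\[
(b\,n)(c_1\,n)(c_2\,n)(b\,n)(c_1\,n)(b\,n)(c_2\,n)(c_1\,n),
\]
which multiplies to the identity in $S_n$ and is non-backtracking. The first matching occurs at $i=4$, with $j=1$, $m=2$, block $b,c_1,c_2,b$; yet $a_5=c_1\in\{c_1,c_2\}$, so your commutation step does not apply, and the ensuing case analysis (which considers $a_{i+1}=b$, already excluded by non-backtracking since $a_i=b$, and then invokes a Coxeter pattern that actually requires $a_{i+1}=c_1$) does not rescue it. A length-reducing rewrite does exist here --- slide $s_{c_2}$ at position $7$ leftward past the \emph{shorter} block $b,c_1,b$ at positions $4$--$6$ to obtain $b,c_1,c_2,c_2,b,c_1,b,c_1$, then cancel $c_2c_2$ and finish with the Coxeter relation --- but one must select the block more carefully than ``the first matching.'' Finally, even when the slide is legal, termination is not established: a single commutation need not produce an adjacent repetition, and your proposed monovariant (unmatched arrows in a prefix of the incrementally built $P$) is not shown to be well defined or decreasing under a rewrite, since $P$ must be rebuilt from scratch for the new word. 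Both a correct choice of block and a precise termination argument are needed before the induction closes.
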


\begin{proof}[Proof of Lemma~\ref{lem:degree_of_freedom}]
Consider a group equation of the form \begin{equation*}
	(a_1 \, n)(a_2 \, n)\cdots(a_{2m} \, n) = 1~.
	\end{equation*}
Any solution can be brought to the form $1 = 1$ using the relations of
Lemma~\ref{l:cox}. Using a Coxeter relation reduces the degree of freedom to choose letters from $\{1,\cdots,n-1\}$ by 1. On the other hand, the number of Coxeter relations used can be seen from the associated diagram as the number $s/2$ of pairs of creation and annihilation of loops. Indeed, each creation step corresponds to opening up the left half of a Coxeter relation, while there should be an annihilation step corresponds to closing it. The order in which the relations are used is encoded by the metric diagram.
This proves the upper bound.

Regarding the second item, the number $(n-1)(n-2)\cdots (n-(m-s/2))$ gives a lower bound, as we can pick $m-s/2$ distinct letters from $\{1,\cdots,n-1\}$ and assemble a path on top of the given diagram. Assume the number of distinct letters used in a path is strictly less than $m-s/2$. Then there should exist a letter, say $c$, appearing at least 4 times. Then the vertex $c(b)$ has degree at least 4, and therefore the associated diagram cannot have strictly positive metric function. 
\end{proof}

\begin{remark}
	
	The associated diagram visualizes the way that the Coxeter relations are `embedded' between the trivial relations in a solution of $(a_1 \, n)(a_2 \, n)\cdots(a_{2m} \, n) = 1$. The metric equals the number of the above commutation relations that needs to be used on each edge of the diagram, minus one.  
\end{remark}

\begin{remark}
	Yet another interpretation can be given for the number $s$ of a diagram: $s = 2g$ where $g$ is the genus of the surface obtained by gluing the boundary of a disc according to the circuit of the diagram. For the case of the diagram described in Figure~\ref{fig:diagram}, we obtain a torus; see Figure~\ref{fig:torus}. Different diagrams 
	correspond to homotopically distinct ways to obtain a compact, orientable surface of genus $g$ from gluing the boundary of a disc with a marked point. 
	Similar remarks hold for $k$-diagrams to be defined below (see \cite[Section 3.2.1]{Sodin2014}).
\end{remark}

\begin{figure}
\begin{center}
\includegraphics{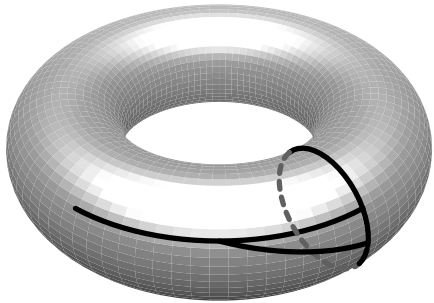}
\end{center}
\caption{The surface obtained from the diagram in Figure \ref{fig:diagram}, cf.\  \cite[Figures 6--7]{Ok}.}\label{fig:torus}
\end{figure}

\medskip

The notion of a diagram, the contraction map, and the properties described above carry over to the case $ k \ge 1$ and any $\overline{t} = (t_1,\cdots,t_k)$. This extension is covered in \cite[Section II.3]{FS}. Let us simply copy the definition, which is \cite[Definition II.3.1]{FS}.

\begin{definition}[$k$-Diagram]\label{def:kdiag}
	A $k$-diagram consists of a graph $G = (V,E)$ together with a $k$-tuple of circuits \begin{equation*}
	\overline{p} = v^1_0 v^1_1\cdots v^1_{r_1} v^1_0~, \cdots,   v^k_0 v^k_1 \cdots v^k_{r_k} v^k_0 
	\end{equation*} on $G$, satisfying \begin{itemize}
		\item each of the $k$ circuits is non-backtracking,
		\item for each edge $(v,v') \in E$, \begin{equation*}
		|\{ (p,j) : v^p_j = v, v^p_{j+1} = v' \}| = | \{ (p,j): v^p_j = v', v^p_{j+1} = v \}| = 1~, 
		\end{equation*}
		\item the degree of each $v^p_j$ is 3, except for those of the initial vertices $v_0^p$, which is 1.
	\end{itemize}
	If there is a function $\overline{l}:E \rightarrow \{ 0,1,2,\cdots \}$ which assigns a length to each edge of $G$, we call $G$ a metric diagram.
\end{definition}

We omit the definition of the contraction map (from $\Sigma'(m_1,t_1;\cdots;m_k,t_k)$ to the set of $k$-diagrams), as it can be constructed exactly as in the case $k = 1$. 

Any $k$-diagram can be generated by a sequence of creation and annihilation steps, as in the $k = 1$ case. The only difference is that the particle returns to its original position exactly $k$ times during the entire procedure. Therefore, a $k$-diagram is associated with an even integer $s > 0$, and we have

\begin{lemma}[see Claim II.3.2 of \cite{FS}]\label{lem:counting_diagrams_k}
	If a $k$-diagram is generated by $s$ transitions, it has $3s-k$ edges and $2s$ vertices. Denoting $D_k(s)$ as the number of such diagrams, we have estimates \begin{equation}
	(s/C)^s/(k-1)! \le D_k(s) \le C^{s-1} s^s/(k-1)!~,
	\end{equation}
	for some absolute constant $C > 0$.
\end{lemma}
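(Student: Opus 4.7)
The plan is to extend the proof of the $k=1$ case (Lemma~\ref{lem:counting_diagrams}, i.e.\ Claim II.2.1 of \cite{FS}) to arbitrary $k\ge 1$. For the edge and vertex counts, I observe that by Definition~\ref{def:kdiag} a $k$-diagram has exactly $k$ distinguished vertices $v_0^1,\ldots,v_0^k$ of degree~$1$, while every other vertex has degree~$3$. The handshake lemma then gives $2|E| = k + 3(|V|-k)$, so it suffices to prove $|V| = 2s$; this I would show by induction on $s$, following the automaton described before Lemma~\ref{lem:counting_diagrams}, verifying that each creation transition adds a pair of new vertices (a branching vertex and a fresh turnaround vertex of the new loop) while each annihilation transition adds a pair of vertices at the merging point. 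The base case is the empty diagram on $k$ isolated initial vertices.

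For the upper bound on $D_k(s)$, I would estimate the total number of runs of the automaton producing a $k$-diagram. At each of the $s$ transitions there are at most $O(s)$ choices (which open thread to extend in a creation step, or which currently open loop to close in an annihilation step), giving at most $(Cs)^s$ transition sequences. In addition, the automaton must specify the $k-1$ ordered moments at which the current circuit closes and the next begins; these form an ordered $(k-1)$-subset of the $s$ transitions, contributing $\binom{s}{k-1}\le s^{k-1}/(k-1)!$ choices. Multiplying and absorbing the $s^{k-1}$ (for fixed $k$) into $C^{s-1}$ yields the claimed bound $C^{s-1}s^s/(k-1)!$.

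For the lower bound, I would construct a family of $\gtrsim (s/C)^s/(k-1)!$ distinct $k$-diagrams by a similar automaton argument in reverse: use Lemma~\ref{lem:counting_diagrams} on $s-(k-1)$ transitions to obtain $\ge ((s-k+1)/C)^{s-k+1}$ distinct $1$-diagrams, and then insert $k-1$ ``circuit-breaking'' events at chosen moments (each teleporting the particle to a fresh initial vertex serving as the start of the next circuit); the number of ways to place these $k-1$ breaks is $\binom{s-k+1}{k-1}\gtrsim s^{k-1}/(k-1)!$. Combining and absorbing constants yields the lower bound. The main obstacle I anticipate is the bookkeeping in both directions: verifying that the $O(s)$ estimate per transition is not disrupted by the circuit labels, and that the factor $(k-1)!$ arises from the correct combinatorial source (the ordered insertion of circuit boundaries among transitions) rather than being swallowed by constants. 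Since the underlying automaton is already analyzed in detail for $k=1$ in \cite{FS}, the $k>1$ case is largely a careful relabeling exercise in which the circuit breaks act as ``pauses'' that do not alter the local creation/annihilation dynamics.
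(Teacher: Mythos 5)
Note first that the paper does not prove this lemma --- it is quoted verbatim from Claim~II.3.2 of~\cite{FS} --- so there is no internal argument to compare your proposal against. Your handshake-lemma derivation of $|E| = 3s-k$ from $|V|=2s$ is correct, and the inductive claim that each transition adds two vertices is the right idea and matches what is done in the $k=1$ case.

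However, the bound arguments have a concrete gap in the transition bookkeeping. In the lower bound you take $1$-diagrams generated by $s-(k-1)$ transitions and insert $k-1$ circuit breaks. But a circuit break --- the particle's return to one degree-$1$ initial vertex and teleportation to a fresh degree-$1$ initial vertex --- is neither a loop creation nor a loop annihilation, so it does not increase $s$. Your construction therefore produces $k$-diagrams associated with $s-(k-1)$ transitions, not $s$, and with the wrong edge count ($3(s-k+1)-1$ instead of $3s-k$); it also runs into a parity obstruction, since $s$ must be even for both $1$- and $k$-diagrams, while $s-(k-1)$ is odd whenever $k$ is even. A correct lower bound must instead distribute the $s$ transitions among the $k$ circuits (e.g.\ by summing over even compositions of $s$) and verify the vertex and edge counts carefully. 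The upper bound has a milder but real issue: the product $(Cs)^s\cdot\binom{s}{k-1}$ carries an extra polynomial factor $s^{k-1}$ that cannot be absorbed into $C^{s-1}$ by a constant $C$ independent of $k$ (take $s$ comparable to $k$); while this is harmless in the paper's application where $k$ is fixed, it does not deliver the ``absolute constant'' claimed in the lemma, and a tighter argument would need to produce the $(k-1)!$ denominator without simultaneously reintroducing a comparable numerator, for instance by showing that the circuit-closure positions are largely forced by the transition sequence.
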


Let us state the extension of Lemma \ref{lem:degree_of_freedom} to this case. For each edge $e \in E$, we set $1 \le p_{-}(e) \le p_{+}(e) \le k$ as the indices of the circuits traversing $e$. We also let $ n_0 = \min_{1 \le p \le k} n_p$. 

\begin{lemma}\label{lem:degree_of_freedom2}
	Given a metric $k$-diagram, there are at most \begin{equation*}
	\begin{split}
  (n_0-1) ^{-s/2}(n - 1)^{\kappa} \prod_{e \in E} \left( (n_{p_+(e)}-1) \wedge (n_{p_-(e)}-1) \right)^{\lceil l(e)/2 \rceil -1}
	\end{split}
	\end{equation*} elements of $\Sigma'(m_1,t_1;\cdots;m_k,t_k)$ corresponding to it, where $$0 \le \kappa := \sum_{e \in E} (l(e)/2 - \lceil l(e)/2 \rceil +1)\le 3s~.$$ If $l(e) \ge 1$ for every $e \in E$, then there are at least \begin{equation*}
	\begin{split}
	(n-1) ^{-s/2}(n_0 - m+s/2)^{\kappa} \prod_{e \in E} \left( (n_{p_+(e)}-m+s/2) \wedge (n_{p_-(e)}-m+s/2) \right)^{\lceil l(e)/2 \rceil -1}
	\end{split}
	\end{equation*} such paths, where we set $m = \sum_p m_p/2 = \sum_e l(e)/2$. In particular, if we have in addition $m, s = o(n^{1/2})$ and $n_0 = n(1+o(1))$ then the number is \begin{equation*}
	\begin{split}
	n^{-s/2} \prod_{e \in E} \left(  n_{p_+(e)} \wedge n_{p_-(e)}  \right)^{l(e)/2} (1+o(1))~. 
	\end{split}
	\end{equation*}
\end{lemma}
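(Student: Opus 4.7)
The plan is to extend the argument of Lemma \ref{lem:degree_of_freedom} from single circuits to $k$-diagrams by counting the preimages of a fixed metric $k$-diagram under the contraction map. First I would revisit the generating procedure for $k$-diagrams by $s$ creation/annihilation transitions (Lemma \ref{lem:counting_diagrams_k}) and reconstruct the underlying $k$-tuple of paths by sweeping through this sequence while assigning letters to the edges.

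For the upper bound, the key is to apportion the letter choices edge by edge. On an edge $e$ of length $l(e)$, the path strand contributes $l(e)/2$ letters in total, and each such letter must lie in the common range used by both circuits crossing $e$, namely in $\{1,\ldots,n_{p_-(e)}-1\} \cap \{1,\ldots,n_{p_+(e)}-1\}$. I would split these $l(e)/2$ choices into $\lceil l(e)/2 \rceil - 1$ \emph{interior} ones, each bounded by $(n_{p_+(e)}-1) \wedge (n_{p_-(e)}-1)$, and the remaining $l(e)/2 - \lceil l(e)/2 \rceil + 1$ \emph{boundary} ones, for which I would use the cruder bound $n-1$ because a letter at a shared vertex may interact with neighboring edges traversed by different circuit pairs. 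Summing these boundary contributions over all edges produces the exponent $\kappa$. The factor $(n_0 - 1)^{-s/2}$ finally records the reduction from the $s/2$ Coxeter relations encoded in the generating sequence, each of them removing one letter as in the $k=1$ analysis of Lemma \ref{lem:degree_of_freedom}.

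For the lower bound (under $l(e) \geq 1$ everywhere), I would construct paths using distinct letters throughout, in the spirit of Lemma \ref{lem:degree_of_freedom}. At most $m - s/2$ distinct letters are required, and at each selection a letter must lie in $\{1,\ldots,n_{p_\pm(e)}-1\}$, so at least $n_{p_\pm(e)} - m + s/2$ admissible choices remain at every step; this yields the stated product. The asymptotic in the regime $m, s = o(n^{1/2})$ and $n_0 = n(1+o(1))$ then follows since all factors of the form $n_p - m + s/2$ differ from $n_p$ by $1 + o(1)$. The main obstacle will be the precise bookkeeping at shared vertices, where edges with different pairs of traversing circuits meet and the letter at the shared vertex must be consistent with all incident edges; the parity of $l(e)$ governs whether a given color pair falls entirely in the interior of $e$ or straddles a boundary vertex. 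This is the feature absent from the $k=1$ setting that $\kappa$ is designed to capture, and nailing down the right split into interior and boundary choices is where the extension from $k=1$ to $k \geq 1$ requires the most care.
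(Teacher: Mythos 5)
Your proposal mirrors the paper's own (very terse) argument: count the $m-s/2$ distinct black vertices, observe that at least $\lceil l(e)/2\rceil-1$ of them lie in the interior of the path segment for edge $e$ and so must carry letters from $\{1,\dots,(n_{p_+(e)}-1)\wedge(n_{p_-(e)}-1)\}$, bound the remaining choices crudely by $n-1$ to produce the exponent $\kappa$, and account for the $s/2$ Coxeter reductions via $(n_0-1)^{-s/2}$, with the lower bound obtained by selecting distinct letters. This is essentially the same decomposition and bookkeeping the paper uses, just spelled out a bit more explicitly.
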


\begin{proof}
	The number of black vertices in a path of $\Sigma'(m_1,t_1;\cdots;m_k,t_k)$ contracting to a given metric $k$-diagram is $m-s/2$, as in Lemma \ref{lem:degree_of_freedom}. 
	For each edge $e$ of the $k$-diagram, the corresponding segment of the path has at least $\lceil l(e)/2 \rceil -1 $ black vertices, on which we can put letters from the set $\{ 1,\cdots, (n_{p_+(e)}-1)\wedge (n_{p_-(e)}-1) \}$. The upper and lower bounds follow. 
\end{proof}

\subsubsection{A regular subclass}

Now that we have established the passage from the collection of lists $\Sigma'(2m,0)$ to the set of diagrams, let us define a `regular' subcollection $\Sigma^\star(2m,0) \subset \Sigma'(2m,0)$ which behaves in a particularly nice way in this procedure. For paths in this subcollection, the steps 2--4 in Definition \ref{def:contraction} are unnecessary (which means that the associated metric is strictly positive), and a simple graph-theoretic characterization is available. 

Forgetting about the vertex labels, $\Sigma^\star(2m,0)$ consists of lists whose associated paths of length $2m$ satisfy the following conditions: \begin{itemize}
	\item start from a white vertex of degree 1,
	\item alternate between white and black vertices,
	\item are non-backtracking,
	\item close up without any unmatched arrows,
	\item no vertex has degree exceeding 3, and
	\item if we order all the vertices of degree 3 according to the first time the path visits them, the vertices alternate in color, starting with a black vertex.
\end{itemize}

A path $P$ satisfying the above conditions has $m-s/2$ black vertices for some even $s > 0$. If we pick $m-s/2$ distinct letters $c_1,\cdots,c_{m-s/2}$ from the set $\{ 2,\cdots,n \}$, then by appropriately labeling vertices of $P$ with $c_i(b)$ and $c_i(w)$, together with $1(w)$, we obtain \begin{equation*}
P = P(a_1,\cdots,a_{2m})~,\quad (a_1,\cdots,a_{2m}) \in \Sigma^\star(2m,0)~, \mbox{ where}\quad a_j \in \{ c_1,\cdots,c_{m-s/2} \}~.
\end{equation*}
We use this observation in the proof of Lemma \ref{lem:MMM}. Indeed, we will show with Lemma \ref{lem:degree_of_freedom} that in the asymptotic regime that we consider, we have \begin{equation*}
\Sigma^\star(2m,0) = \Sigma'(2m,0) (1 + o(1))~.
\end{equation*}

Similarly as before, for each $k > 1$, we define a subclass \begin{equation*}
\Sigma^\star(m_1,t_1;\cdots;m_k,t_k) \subset \Sigma'(m_1,t_1;\cdots;m_k,t_k)
\end{equation*}
as the collection of lists whose associated metric $k$-diagrams have strictly positive metric. A totally analogous graph-theoretic characterization for $\Sigma^\star(m_1,t_1;\cdots;m_k,t_k)$ can be given, and again we will need the fact that \begin{equation*}
\Sigma^\star(m_1,t_1;\cdots;m_k,t_k) = \Sigma'(m_1,t_1;\cdots;m_k,t_k) ( 1 + o(1))
\end{equation*}
in the asymptotic regime considered in Lemma \ref{lem:MMM}.

\subsubsection{Limiting continuous functions}\label{s:cont}

Now we describe the functions $\phi$ which serve as the Laplace transform
of the correlation functions of the Airy line ensemble, (\ref{eq:phi1}). We start
from the description of an auxiliary set of functions $\psi$.

Fix a $k$-diagram $\mathcal{D}$ with $|E|= 3s-k$ and $|V| = 2s$. For $1 \le p \le k$ and $e \in E$, let $c_p(e) \in \{ 0,1,2 \}$ be the number of times $e$ is traversed by the $p$-th circuit.  Given a vector $\overline{\alpha} \in \mathbb{R}^k_+$, consider the following system of equations $S_\mathcal{D}(\overline{\alpha})$ with variables $\xi(e),e\in E$: \begin{equation*}
\sum_{e \in E} c_p(e) \xi(e) = \alpha_p, \quad 1 \le p \le k.
\end{equation*}
Then,  we denote $\Delta_\mathcal{D}(\overline{\alpha})$ by the convex polytope of positive real solutions to $S_\mathcal{D}(\overline{\alpha})$. This is a $(3s-2k)$-dimensional polytope in $\mathbb{R}^{3s-k}$. We normalize the Lebesgue measure on $\Delta_\mathcal{D}(\overline{\alpha})$ by the factor 
\begin{equation}\label{eq:norm}
\begin{split}
\lim_{ \overline{m}\rightarrow \infty} \frac{\left|  \Delta_\mathcal{D}(\overline{m}) \cap \mathbb{Z}^{3s-k}  \right|}{\mathrm{Vol}( \Delta_\mathcal{D}(\overline{m}))}
\end{split}
\end{equation}
 where the limit is taken along integer vectors with even components $\overline{m}$ asymptotically parallel to $\overline{\alpha}$, and $\mathrm{Vol}(\cdot)$ is taken with respect to the standard $(3s-2k)$-dimensional Lebesgue measure. We then define the integral \begin{equation}
I^{\mathcal{D}}(\overline{\alpha},\overline{\tau})  = \int_{\Delta_{\mathcal{D}}(\overline{\alpha})} \exp\left( -\sum_{e \in E}  |\tau_{p_+(e)} - \tau_{p_{-}(e)}|  \omega(e)  \right) d\overline{\omega}
\end{equation}
 where $d\overline{\omega}$ denotes the integration with respect to the above normalized measure. These integrals allow us to define a function \begin{equation*}
 \begin{split}
 \psi: \uplus_{k\ge 0} \left( \mathbb{R}^k_+ \times \mathbb{R}^k_+ \right) \rightarrow \mathbb{R}_+
 \end{split}
 \end{equation*} inductively on $k \ge 0$ by \begin{equation*}
 \begin{split}
 \sum_{\mathcal{D}} I^\mathcal{D}(\overline{\alpha},\overline{\tau}) = \sum_{I \subset \{1,\cdots,k\}} \psi(\overline{\alpha}|_I,\overline{\tau}|_I) \psi(\overline{\alpha}|_{I^c},\overline{\tau}|_{I^c})
 \end{split}
 \end{equation*} (the sum on the left hand side ranges over all possible $k$-diagrams $\mathcal{D}$), with the convention $\psi(\varnothing,\varnothing)\equiv 1$. It will follow from the proof of Lemma \ref{lem:MMM} that $\psi(\overline{\alpha},\overline{\tau})$ is a continuous function $\mathbb{R}^k_+ \times \mathbb{R}^k_+  \rightarrow \mathbb{R}_+$, for each $k \ge 1$. 

\begin{example}\label{ex:k=1}
	In the simplest case when $k = 1$ and $\tau = 0$, we may compute directly that \begin{equation*}
	\begin{split}
	I^{\mathcal{D}}(\alpha,0) = \frac{1}{(3s-2)!} \left( \frac{\alpha}{2} \right)^{3s-2}
	\end{split}
	\end{equation*} for any 1-diagram $\mathcal{D}$ with $3s-1$ edges. Hence we obtain \begin{equation*}
	\begin{split}
	\psi(\alpha,0) = \frac{1}{2}\sum_{s} \frac{D(s)}{(3s-2)!} \left( \frac{\alpha}{2} \right)^{3s-2}
	\end{split}
	\end{equation*} with $D(s)$ being the number of 1-diagrams with $3s-1$ edges. 
\end{example}

Finally, define a function \begin{equation*}
\begin{split}
\phi: \uplus_{k\ge 0} \left( \mathbb{R}^k_+ \times \mathbb{R}^k_+ \right) \rightarrow \mathbb{R}_+
\end{split}
\end{equation*} via the formula 
\begin{equation}\label{eq:phidef}
\phi(\overline{\alpha},\overline{\tau}) = \sum_{I \subset\{1,\cdots,k\}}  \prod_{p\notin I} \left(\frac{1}{2\sqrt{\pi}} \left( \frac{\alpha_p}{2} \right)^{-3/2}  \right)  \prod_{p \in I} \left( \int_{ \mathbb{R}_+ }  \frac{2\xi_p d\xi_p}{\sqrt{\pi \alpha_p}}   \psi( 2 \sqrt{\overline{\alpha}}|_I \cdot \overline{\xi}|_I,\overline{\tau} ) \right),
\end{equation} where $\sqrt{\overline{\alpha}}|_I \cdot \overline{\xi}|_I$ denotes the coordinate-wise product; $\phi(\varnothing,\varnothing) \equiv 1$.

It is shown in \cite{Sodin2015}  that for $N_p = N - t_p'$, $t_p'/N^{2/3} \to \tau_p$, $r_p' / N^{2/3} \to \alpha_p$
\begin{equation}\label{eq:fix}\mathbb{E} \prod_{p=1}^k \tr \left( \frac{H^{(N_p)}}{2\sqrt{N_p}}\right)^{r_p'} - 
\sum_{I \subset \{1,\cdots,k\}} (-1)^{\sum_{i\in I} r_i} \phi(\overline{\alpha}|_I,\overline{\tau}|_I) \phi(\overline{\alpha}|_{I^c},\overline{\tau}|_{I^c}) \to 0 \end{equation}
and that  the functions $\phi$ are the Laplace transforms of the Airy$_2$ line ensemble; 
\begin{equation}\label{eq:phi=lapl}
\begin{split}
\phi(\overline{\alpha},\overline{\tau}) = \mathbb{E} \prod_{p=1}^k \sum_j \exp(\alpha_p \lambda_j(\tau_p)).
\end{split}
\end{equation} 

\begin{remark}\label{rem:fix} We use the opportunity to correct several mistakes from \cite{Sodin2015}.
First, eq.\ (3.8) thereof does not take proper account of the first term of (\ref{eq:phidef}).
The correct definition of $\phi^\#$ should be
\begin{equation}\phi^\#(\bar\alpha,\bar{s}) = \sum_{I \subset \{1,\cdots,k\}} \phi(\bar\alpha|_I, \bar{s}|_I) \phi(\bar\alpha|_{I^c}, \bar{s}|_{I^c})~, \end{equation}
where $\phi$ is defined as in (\ref{eq:phidef}). Second, parity is not properly taken into account in \cite[Lemma~3.1 and Lemma~3.2]{Sodin2015}, which hold as stated only if all $m_p$ and $n_p$ are even. The corrected formulation is given in (\ref{eq:fix}) 
and (\ref{eq:fix2}) below. Third, the normalization
factor (\ref{eq:norm}) is not explicitly stated in \cite{Sodin2015}, and neither it is in \cite{Sodin2014}.
\end{remark}

\section{The main technical statement}\label{S:main}

We are ready to state and prove the main technical lemma. To begin with, we prove the
partition analogue of (\ref{eq:fix}) for the (usual) moments 
\begin{equation}\label{eq:JMtraces'}
\mathcal{M}(\overline{r},\overline{t}) = \frac{1}{n! \prod_{p=1}^k n_p^{-1/2}} \tr \prod_{p=1}^k \left( \frac{X_{n_p}}{2n_p^{1/2}}  \right)^{r_p}~,\quad n_p = n-t_p-p+1~.
\end{equation}

The following lemma will imply Proposition~\ref{prop'}, see \ref{s:prop'}. Note the
similarity with the random matrix analogue (\ref{eq:fix}).

\begin{lemma}\label{lem:MM}
	We have a bound 
	\begin{equation}\label{eq:MM_bound}
	\begin{split}
	\mathcal{M}(\overline{r},\overline{t})  \le \prod_{p=1}^k \frac{Cn^{1/2}}{r_p^{3/2}} \exp(C_k r_p^3/n)
	\end{split}
	\end{equation}
	as well as, in the asymptotic regime \begin{equation}\label{eq:JM_asymp'}
	r_p \sim  2\alpha_p n^{1/3}~,\quad t_p = 2\tau_p n^{5/6}
	\end{equation} we have
	\begin{equation}\label{eq:MM_asym}
	\begin{split}
	\mathcal{M}(\overline{r},\overline{t}) = \sum_{I \subset \{1,\cdots,k\}} (-1)^{\sum_{i\in I} r_i} \phi(\overline{\alpha}|_I,\overline{\tau}|_I) \phi(\overline{\alpha}|_{I^c},\overline{\tau}|_{I^c})  + o(1)~.
	\end{split}
	\end{equation}
\end{lemma}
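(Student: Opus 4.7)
The strategy is to expand each power $X_{n_p}^{r_p}$ in the basis $\{P_l^{n_p-1}\}$ of orthogonal polynomials from \S\ref{sub:op}, thereby rewriting $\mathcal{M}(\overline{r},\overline{t})$ as a weighted sum of modified moments, and then to apply Lemma~\ref{lem:MMM} to each term. The proof is the partition counterpart of the random-matrix derivation of (\ref{eq:fix}) in \cite{Sodin2015}; thanks to the combinatorial parallel established in Section~\ref{s:mmm}, the two arguments proceed side by side.

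Concretely, combining the elementary Chebyshev identity
\[
 (2y)^r \;=\; \sum_{k=0}^{\lfloor r/2\rfloor}\left[\binom{r}{k}-\binom{r}{k-1}\right] U_{r-2k}(y)
\]
(easily verified from $U_j(\cos\theta)=\sin((j+1)\theta)/\sin\theta$) with the inversion formula (\ref{eq:viachebinv}), one obtains an expansion
\[
 X_{n_p}^{r_p} \;=\; \sum_{l_p \equiv r_p \pmod{2}} c^{(n_p-1)}_{r_p,l_p}\, P_{l_p}^{n_p-1}(X_{n_p})~,
\]
where each coefficient $c^{n}_{r,l}$ is explicitly given as a telescoping sum of binomial differences times powers of $(n-1)$. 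Substituting into (\ref{eq:JMtraces'}) and invoking Lemma~\ref{lem:chebyshev_g} then rewrites
\[
\mathcal{M}(\overline{r},\overline{t}) \;=\; \sum_{\overline{l}} \left(\prod_{p=1}^{k}\hat c^{(n_p-1)}_{r_p,l_p}\right)\,|\Sigma'(l_1,t_1;\ldots;l_k,t_k)|~,
\]
with $\hat c$ denoting the coefficients normalized by the factor $\sqrt{n_p}\,(2\sqrt{n_p})^{-r_p}$ coming from the prefactor of $\mathcal{M}$.

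For the asymptotic statement (\ref{eq:MM_asym}), I would use Stirling's formula and a local central limit theorem to describe the behavior of $\hat c^{(n_p-1)}_{r_p,l_p}$ in the edge regime (\ref{eq:JM_asymp'}): viewed as a function of the rescaled variable $\xi_p$ appearing inside $\phi$ in (\ref{eq:phidef}), the coefficient density converges to the factor $2\xi_p\,d\xi_p/\sqrt{\pi\alpha_p}$ that is integrated against $\psi$ in (\ref{eq:phidef}), while the degenerate contribution from the extreme indices $l_p \in \{0,1\}$ produces the alternative prefactor $(2\sqrt\pi)^{-1}(\alpha_p/2)^{-3/2}$ associated with the indices $p\notin I$. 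The sign $(-1)^{r_p}$ falls out of this analysis because it is tied to the parity constraint $l_p\equiv r_p\pmod 2$, and this accounts for the factor $(-1)^{\sum_{p\in I}r_p}$ in (\ref{eq:MM_asym}). Convolving the limiting density of $\hat c$ with the modified-moment asymptotics given by Lemma~\ref{lem:MMM}, which itself expands $|\Sigma'|$ as a sum of the integrals $I^{\mathcal{D}}(\cdots)$ over metric $k$-diagrams (using Lemmas~\ref{lem:counting_diagrams_k}--\ref{lem:degree_of_freedom2} together with the Riemann-sum interpretation of the normalization (\ref{eq:norm})), one recovers exactly the expression (\ref{eq:phidef}) for each subset $I$, and hence (\ref{eq:MM_asym}).

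The bound (\ref{eq:MM_bound}) follows from the same decomposition with only crude estimates: the Chebyshev coefficients are controlled by their Gaussian envelope, each diagram contribution is estimated via the upper bound in Lemma~\ref{lem:degree_of_freedom2}, and summing over the number of transitions $s$ using Lemma~\ref{lem:counting_diagrams_k} yields the factor $\exp(C_k r_p^3/n)$, while the leading $s=0$ term supplies the polynomial prefactor $n^{1/2}/r_p^{3/2}$. I anticipate the main obstacle to be the third step above, in which the precise Chebyshev-coefficient asymptotics must be matched against the measure (\ref{eq:norm}) inside $\phi$, with careful bookkeeping of parity and of the two edges $\{f_j\}$ and $\{-f_j'\}$ of the Frobenius spectrum; this matching is delicate, but the combinatorial constructions of Section~\ref{s:mmm} were designed precisely so that it can be carried through by essentially the same argument as in \cite{Sodin2015}.
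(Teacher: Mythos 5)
Your proposal is correct and follows essentially the same route as the paper: expand $X^r$ in the Chebyshev/$P_l$ basis (the paper does this in two steps, via Snyder's identities for $U_l$ and then the inversion~(\ref{eq:viachebinv}), whereas you compose them), apply Lemma~\ref{lem:MMM} term by term, and use Gaussian estimates on the coefficients (de~Moivre--Laplace in the paper, Stirling/local CLT in yours) to identify the limiting integral with~(\ref{eq:phidef}) and to prove the bound. One small imprecision worth noting: the polynomial prefactor $n^{1/2}/r_p^{3/2}$ in~(\ref{eq:MM_bound}) does not come from an ``$s=0$'' diagram (the transition count $s$ in Section~\ref{s:mmm} is always a positive even integer) but from the degenerate term $m_p=0$ in the Chebyshev expansion, i.e.\ $\widetilde{\mathcal{M}}^*(0,t_1)\le n^{1/2}$, multiplied by the envelope $r^{-3/2}$ of the binomial coefficient; and when justifying the exchange of limits you will want, as the paper does with the split $\mathcal{I}_1+\cdots+\mathcal{I}_4$, to truncate the sum at $m\asymp Rn^{1/6}$ and use the bound from Lemma~\ref{lem:MMM} to kill the tail, since the coefficient density is not uniformly summable against the raw bound.
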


\medskip
We prove this lemma by first showing a corresponding statement for the modified moments  involving Chebyshev polynomials and then `integrate' to recover the usual moments with the help of formulas \eqref{eq:Snyder}. Therefore, we define \begin{equation}\label{eq:MMM_perm}
\widetilde{\mathcal{M}}(\overline{m},\overline{t}) = \frac{1}{n!\prod_{p=1}^k n_p^{m_p/2-1/2} } \tr \left( \prod_{p=1}^k P_{m_p}^{n-t_p-p}(X_{n_p}) \right)~,\quad n_p = n-t_p-p+1~.
\end{equation}

\begin{lemma}\label{lem:MMM}
	We have an upper bound \begin{equation}\label{eq:upper_bound}
	\widetilde{\mathcal{M}}(\overline{m},\overline{t}) \le (Cm)^k \exp(C_k m^{3/2}n^{-1/4})~.
	\end{equation}
	
	Moreover, in the asymptotic regime \begin{equation}\label{eq:asym_perm}
	m_p \sim \alpha_p n^{1/6}~,\quad t_p = 2\tau_p n^{5/6}~,\quad n_p \sim n(1 - 2\tau_p n^{-1/6})~,
	\end{equation}
	we have 
	\begin{equation}\label{eq:asym_expression_perm}
	\frac{1}{n^{k/6}} \widetilde{\mathcal{M}}(\overline{m},\overline{t}) = \sum_{I \subset \{1,\cdots,k\}} (-1)^{\sum_{i\in I} m_i} \psi(\overline{\alpha}|_I,\overline{\tau}|_I) \psi(\overline{\alpha}|_{I^c},\overline{\tau}|_{I^c})  + o(1)~.
	\end{equation}
\end{lemma}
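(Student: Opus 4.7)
The plan mirrors \cite[proof of Lemma~3.1]{Sodin2015} in the random matrix setting, where traces of Chebyshev polynomials of JM elements play the role of random-matrix moments and non-backtracking solutions of equations in $S_n$ replace closed non-backtracking walks.

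By Lemma~\ref{lem:chebyshev_g}, $\widetilde{\mathcal{M}}(\overline{m},\overline{t}) = |\Sigma'(\overline{m},\overline{t})|/\prod_{p=1}^k n_p^{m_p/2-1/2}$. The plan is to split the count by contracting each list to a metric $k$-diagram $(\mathcal{D},\overline{l})$ via Definition~\ref{def:contraction}, first verifying that in the regime (\ref{eq:asym_perm}) the contribution of $\Sigma'\setminus\Sigma^\star$ is asymptotically negligible (since each zero-length edge loses one degree of freedom in the letter assignment without a corresponding $n^{1/2}$-factor, and Lemma~\ref{lem:counting_diagrams_k} controls the diagram count). For each metric $k$-diagram with $s$ transitions, Lemma~\ref{lem:degree_of_freedom2} gives matching two-sided bounds
\[
N(\mathcal{D},\overline{l}) \sim n^{-s/2}\prod_{e\in E}\bigl(n_{p_+(e)}\wedge n_{p_-(e)}\bigr)^{l(e)/2}.
\]
With the substitution $l(e)=\omega(e)n^{1/6}$ and the expansion $n_p\approx n\exp(-2\tau_p n^{-1/6})$, division by $n^{k/6}\prod_p n_p^{m_p/2-1/2}$ balances the powers of $n$ to a net exponent $(3s-2k)/6-s/2+k/2=k/6$, so that the sum over admissible integer metrics in $\Delta_\mathcal{D}(\overline{m})$ becomes a Riemann sum for $I^\mathcal{D}(\overline{\alpha},\overline{\tau})$ with the measure fixed by (\ref{eq:norm}); the integrand $\exp(-\sum_e|\tau_{p_+(e)}-\tau_{p_-(e)}|\omega(e))$ emerges from combining the Taylor expansion of $(n_{p_+}\wedge n_{p_-})^{l(e)/2}$ with that of $\prod_p n_p^{m_p/2}$ via the simplex constraint $\sum_e c_p(e)\omega(e)=\alpha_p$.

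Summing over $k$-diagrams yields
\[
\frac{\widetilde{\mathcal{M}}(\overline{m},\overline{t})}{n^{k/6}}\longrightarrow \sum_\mathcal{D} \chi(\overline{m},\mathcal{D})\,I^\mathcal{D}(\overline{\alpha},\overline{\tau}),
\]
where $\chi(\overline{m},\mathcal{D})=\prod_C\mathbbm{1}[\sum_{i\in C}m_i\text{ even}]$ runs over the connected components $C$ of $\mathcal{D}$; the indicator arises because each component's restricted product of transpositions must equal the identity, forcing the transposition count in that component to be even. Writing $\mathbbm{1}[\text{even}]=\tfrac12(1+(-1)^{(\cdot)})$, expanding over components, and invoking the defining relation $\sum_\mathcal{D}I^\mathcal{D}=\sum_I\psi|_I\psi|_{I^c}$ on each factorized piece recasts the right-hand side as the signed sum in (\ref{eq:asym_expression_perm}). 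For the upper bound (\ref{eq:upper_bound}), the Riemann sum is replaced by the crude bound $\operatorname{Vol}(\Delta_\mathcal{D}(\overline{m}))\le(Cm)^{3s-2k}/(3s-2k)!$, combined with the diagram count $D_k(s)\le C^{s-1}s^s/(k-1)!$ of Lemma~\ref{lem:counting_diagrams_k} and summed in $s$: the $s=k$ term produces the $(Cm)^k$ factor, while the factorial decay in $s$ is absorbed into the correction $\exp(C_km^{3/2}n^{-1/4})$ coming from diagrams with $s\lesssim m^{1/2}$.

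The main obstacle is the delicate bookkeeping in Step~2: verifying that the ratio $\bigl(n_{p_+}\wedge n_{p_-}\bigr)^{l(e)/2}/\prod_p n_p^{m_p/2}$ collapses to $\exp\bigl(-\sum_e|\tau_{p_+(e)}-\tau_{p_-(e)}|\omega(e)\bigr)$ through an algebraic rearrangement that exploits the edge-circuit incidence $c_p(e)$ and the simplex constraint (rather than producing the naive $\max$ or $\min$ combinations), and checking that the Riemann sums converge uniformly across the combinatorial class of $k$-diagrams so that the limit may be exchanged with the sum $\sum_\mathcal{D}$.
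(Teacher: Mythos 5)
Your proposal is correct and follows essentially the same route as the paper: split $\Sigma'$ by metric $k$-diagram, show $\Sigma'\setminus\Sigma^\star$ is negligible, apply Lemma~\ref{lem:degree_of_freedom2} for the letter-assignment count, rewrite the prefactor as $\prod_e (n_{p_+(e)}n_{p_-(e)})^{-l(e)/4}$ so that the ratio with $(n_{p_+}\wedge n_{p_-})^{l(e)/2}$ collapses to $\exp(-|\tau_{p_+}-\tau_{p_-}|\,\omega(e))$, recognize the Riemann sum for $I^{\mathcal{D}}$ with normalization (\ref{eq:norm}), track the parity compatibility indicator over connected components, and expand $\mathbbm{1}[\cdot\text{ even}]=\tfrac12(1+(-1)^{(\cdot)})$ to produce the signed sum. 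The only blemish is a benign normalization slip: the change of variables should be $l(e)=2\omega(e)n^{1/6}$ (matching the constraint $\sum_e c_p(e)l(e)=2m_p$ with $m_p\sim\alpha_p n^{1/6}$), not $l(e)=\omega(e)n^{1/6}$; this does not affect the power counting or the structure of the argument.
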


\noindent The counterpart of (\ref{eq:asym_expression_perm}) for random matrices is
\begin{equation}\label{eq:fix2} 
\mathbb{E} \prod_{p=1}^k \tr P_{m_p'}^{N_p-1}(H^{(N_p)})
= \sum_{I \subset \{1,\cdots,k\}} (-1)^{\sum_{i\in I} m_i} \psi(\overline{\alpha}|_I,\overline{\tau}|_I) \psi(\overline{\alpha}|_{I^c},\overline{\tau}|_{I^c})  + o(1)
\end{equation}
where $N_p = N - t_p'$, $t_p'/N^{2/3} \to 2\tau_p$, $m_p' / N^{1/3} \to \alpha_p$.

\subsection{Proof of Lemma \ref{lem:MMM}}
	
	As a warm-up, we establish the lemma in the simplest case, when $k = 1$ and $t = 0$. This proof is parallel to that given in \cite[Section I.5]{FS} for the random matrix case.
	
\subsubsection{The case of $k = 1$ and $t = 0$} We set $m_1 = 2m$ so that $2m/n^{1/6} \rightarrow \alpha$ in the limit $n\rightarrow\infty$. We need to establish \begin{equation*}
	\frac{1}{n! n^{m-1/2}} \tr P_{2m}^{n-1} (X_n) = n^{1/6} (2\psi(\frac{2m}{n^{1/6}},0) + o(1) )~,
	\end{equation*}
	together with an upper bound \begin{equation*}
	\frac{1}{n! n^{m-1/2}} \tr P_{2m}^{n-1} (X_n) \le Cm \exp(Cm^{3/2} n^{-1/4})~.
	\end{equation*}
	
	\medskip
	
	In view of the identity $\tr P_{2m}^{n-1}(X_n) = n! | \Sigma'(2m,0)|$, we estimate the contribution of each 1-diagram $\mathcal{D}$ to the set $\Sigma'(2m,0)$. Fix a diagram $\mathcal{D}$ with $2s$ vertices and $3s-1$ edges, and consider the subset of elements of $\Sigma'(2m,0)$ corresponding to $\mathcal{D}$. Forgetting about the vertex labels for a moment, denoting $q$ to be the number of edges of $\mathcal{D}$ which should carry an odd length (recall that there is a parity restriction for each edge due to vertex coloring), the number of ways to place the lengths on $\mathcal{D}$ does not exceed \begin{equation*}
	{{(2m+q)/2 + (3s-1) - 1}\choose{(3s-1)-1}} \le \frac{(m+6s-3)^{3s-2}}{(3s-2)!}
	\end{equation*}
	and then (Lemma~\ref{lem:degree_of_freedom}) a selection of $m-s/2$ distinct letters from the set $\{1,\dots,n-1\}$ specifies an element of $\Sigma'(2m,0)$. Hence we have (with Lemma~\ref{lem:counting_diagrams}) an upper bound \begin{equation*}
	\begin{split}
	|\Sigma'(2m,0)| &\le \sum_{2 \le s \le 2m} D(s) (n-1)^{m-s/2}\frac{(m+6s-3)^{3s-2}}{(3s-2)!}\\
	&\le m n^m \sum_{2 \le s } C^{s-1} s^s n^{-s/2} \frac{m^{3s-2}}{(3s-2)!}\\
	&\le m n^{m-1/2} \exp(Cm^{3/2}n^{-1/4})~.
	\end{split}
	\end{equation*}
	
	To obtain the asymptotic expression, we take $s_0$ such that \begin{equation*}
	1 \wedge m^{3/2}/n^{1/4} \ll s_0 \ll m^{1/2}
	\end{equation*}
	(where we write $A \ll B$ if $A/B \to 0$), and with both sides of the estimate from Lemma~\ref{lem:counting_diagrams}, one can see that the  contribution of $s_0 < s \le 2m$ to the sum \begin{equation*}
	\sum_{2 \le s \le 2m} D(s) (n-1)^{m-s/2} \frac{(m+6s-3)^{3s-2}}{(3s-2)!}
	\end{equation*}
	is negligible. Therefore, \begin{equation*}
	\begin{split}
	|\Sigma'(2m,0)| &\le \sum_{2 \le s \le 2m} D(s) (n-1)^{m-s/2} \frac{(m+6s-3)^{3s-2}}{(3s-2)!} \\
	&\le \sum_{2 \le s \le s_0} D(s) (n-1)^{m-s/2} \frac{(m+6s-3)^{3s-2}}{(3s-2)!}  (1 + o(1)) \\
	&\le \sum_{2 \le s \le s_0} D(s) (n-1)^{m-s/2} \frac{m^{3s-2}}{(3s-2)!}  (1 + o(1)) \\
	&\le \sum_{2 \le s} D(s) (n-1)^{m-s/2} \frac{m^{3s-2}}{(3s-2)!}  (1 + o(1))~.
	\end{split}
	\end{equation*}
	
	On the other hand, we note that for each diagram, there are \begin{equation*}
	{{(2m+q)/2 - 1}\choose{(3s-1)-1}}
	\end{equation*}
	ways to place the lengths with correct parity so that each edge has a strictly positive length. For $s \le s_0 \ll m^{1/2}$, then 
	\begin{equation*}
	{{(2m+q)/2 - 1}\choose{3s-2}} = \frac{m^{3s-2}}{(3s-2)!} (1+o(1))
	\end{equation*}
	and, by Lemma~\ref{lem:degree_of_freedom}, there are $(n-1)^{m-s/2} (1+o(1))$ ways to choose the vertices. Any path coming from this procedure is associated with an element of $\Sigma^\star(2m,0)$. Hence \begin{equation*}
	\begin{split}
	|\Sigma'(2m,0)| \ge |\Sigma^\star(2m,0)| &\ge \sum_{2 \le s \le s_0} D(s)(n-1)^{m-s/2} \frac{m^{3s-2}}{(3s-2)!} (1+o(1)) \\
	&\ge \sum_{2 \le s} D(s)n^{m-s/2} \frac{m^{3s-2}}{(3s-2)!} (1+o(1))
	\end{split}
	\end{equation*}
	which establishes the desired statement (recall Example~\ref{ex:k=1}).

\subsubsection{The general case}

Let us focus on the asymptotics \eqref{eq:asym_expression_perm};
the upper bound \eqref{eq:upper_bound} follows along similar lines. 
As in the case $k=1$, we start with the inequality
\[\frac{\Sigma'(m_1,t_1;\cdots;m_k,t_k)}{\prod_{p=1}^k n_p^{m_p/2-1/2}}  = \widetilde{\mathcal{M}}(\overline{m},\overline{t}) 
\geq \frac{\Sigma^\star(m_1,t_1;\cdots;m_k,t_k)}{\prod_{p=1}^k n_p^{m_p/2-1/2}}  \]
and prove that the right-hand side of \eqref{eq:asym_expression_perm}
is a lower bound for $\Sigma^*$ and an upper bound for $\Sigma'$. The two
estimates are proved similarly to one another, therefore we focus on the
second one. Let us estimate the contribution of each diagram $\mathcal{D}$.
	
	To begin with, we may rewrite the prefactor as \begin{equation*}
	\frac{1}{\prod_p n_p^{-1/2}} \prod_{e \in E} \left( n_{p_+(e)}^{-l(e)/4} \cdot n_{p_{-}(e)}^{-l(e)/4} \right)
	\end{equation*}
	and Lemma \ref{lem:degree_of_freedom2} gives that the combinatorial factor coming from the choice of letters equals \begin{equation*}
	(1 + o(1)) n^{-s/2} \prod_{e \in E}  \left( n_{p_+(e)} \wedge n_{p_{-}(e)}  \right)^{l(e)/2}~.
	\end{equation*}
	
	The metric $\overline{l}$ should satisfy the system of equations \begin{equation*}
	\sum_{e \in E} c_p(e) l(e) = 2m_p~, \quad 1 \le p \le k~,
	\end{equation*}
	with a parity restriction for each $l(e)$. Define $\widetilde{\Delta}_{\mathcal{D}}(\overline{m})$ as the set of positive integer solutions of above, with correct parity. Note that the $k$-diagram $\mathcal{D}$ should satisfy some compatibility conditions with $\overline{m}$ for $\widetilde{\Delta}_{\mathcal{D}}(\overline{m})$ to be non-empty: if we write the connected components of $\mathcal{D}$ by $\mathcal{D}_1,\cdots,\mathcal{D}_h$, then for each $1 \le j \le h$, the sum of indices $m_i$ should be even, with $i$ ranging over the indices of the circuits traversing $\mathcal{D}_j$. Once these relations are satisfied, we may assume that $\overline{m}$ has even integer components (by producing a simple bijection between the solution sets) for the sake of computing the asymptotics. 
	
	Now given a compatible diagram $\mathcal{D}$, its contribution can be re-written as 
	\begin{equation*}\begin{split}
	&\sum_{\overline{l} \in \widetilde{\Delta}_{\mathcal{D}}(\overline{m}) } (1+ o(1)) \frac{n^{-s/2}}{\prod_{p=1}^k n_p^{-1/2}} \prod_{e \in E} \frac{ \left( n_{p_+(e)} \wedge n_{p_{-}(e)}  \right)^{l(e)/2} }{ \left( n_{p_+(e)} \cdot n_{p_{-}(e)} \right)^{l(e)/4} }\\
	&\quad= (1 + o(1))  \frac{n^{-s/2}}{\prod_{p=1}^k n_p^{-1/2}}  n^{(3s-2k)/6} \int_{\Delta_{\mathcal{D}}(\overline{\alpha})} \exp\left( -\sum_{e \in E} |\tau_{p_+(e)} - \tau_{p_{-}(e)}| \right) d\overline{\omega}\\
	&\quad=(1+o(1)) n^{k/6} I_{\mathcal{D}} (\overline{\alpha},\overline{\tau})~,
	\end{split}\end{equation*}
	with a limiting change of variables $dl = 2n^{1/6} d\omega$ as $n \rightarrow \infty$. Hence, we obtain 
	\begin{equation*}
	\begin{split}
	\frac{1}{n^{k/6}} \widetilde{\mathcal{M}}(\overline{m},\overline{t}) = \sum_{\mathcal{D}} I^\mathcal{D}(\overline{\alpha},\overline{\tau}) + o(1)
	\end{split}
	\end{equation*} where the sum is over $k$-diagrams compatible with the parity of $\overline{m}$. A simple combinatorial calculation then establishes the lemma.

\subsection{Proof of Lemma \ref{lem:MM} from Lemma \ref{lem:MMM}}

	For simplicity, let us concentrate on the case $k = 1$. Extension to the general case is straightforward, modulo some combinatorial manipulations. We set $m_1 = 2m$, and fix $\alpha > 0$ so that $2m\sim \alpha n^{1/6}$ as $n\rightarrow\infty$. As a first step, we use the equality (\ref{eq:viachebinv}) to obtain from Lemma \ref{lem:MMM} that the quantity \begin{equation}
	\begin{split}
	\widetilde{\mathcal{M}}^*(2m,t_1):= \frac{\sqrt{n-t_1-2}}{n!} \tr U_{2m}\left( \frac{X_n}{2\sqrt{n-t_1-2}} \right) 
	\end{split}
	\end{equation} enjoys the bound \begin{equation}\label{eq:Cheby_bound}
	\begin{split}
	\widetilde{\mathcal{M}}^*(2m,t_1)	\le 
	Cm \exp(Cm^{3/2}n^{-1/4})
	\end{split}
\end{equation} (possibly with a larger constant $C>0$), as well as the asymptotics \begin{equation}\label{eq:Cheby_asym}
\begin{split}
\widetilde{\mathcal{M}}^*(2m,t_1) = n^{1/6} \left( 
2\psi(\alpha,\tau_1)+o(1) \right)
\end{split}
\end{equation}
(see for instance \cite[Proof of Theorem I.2.4]{FS}). We now recall the following identities (see e.g. Snyder \cite{Sn})\begin{equation}\label{eq:Snyder}
	\begin{split}
	\lambda^{2r} &= \frac{1}{(2r+1)2^{2r}} \sum_{m=0}^r (2m+1) {{2r+1}\choose{r-m}}U_{2m}(\lambda)~, \\
	\lambda^{2r-1} &= \frac{1}{(2r)2^{2r-1}} \sum_{m=0}^r 2m { 2r\choose r-m  }U_{2m-1}(\lambda)~.
	\end{split}
	\end{equation} We plug in \begin{equation*}
	\begin{split}
	\lambda = \frac{X_n}{2\sqrt{n-t_1-2}}
	\end{split}
	\end{equation*} to the above formula, take the traces of both sides, and appropriately normalize to obtain 
	\begin{equation}\label{eq:SUM}
	\begin{split}
	\mathcal{M}(2r,t_1) =  \sum_{m=0}^r \frac{2m+1}{(2r+1)2^{2r}} {{2r+1}\choose{r-m}} \widetilde{\mathcal{M}}^*(2m,t_1)~.
	\end{split}
	\end{equation} Note that $\widetilde{\mathcal{M}}^*(0,t_1) = (n-t_1-2)^{1/2}/n! \times \tr U_0 \le n^{1/2}$. We then use a simple inequality \begin{equation*}
	\begin{split}
	{{2r+1}\choose{r-m}}  \le C \frac{2^{2r}}{r^{1/2}} e^{-2m^2/r}
	\end{split}
	\end{equation*} together with \eqref{eq:upper_bound} to obtain the desired bound \begin{equation*}
	\begin{split}
	\mathcal{M}(2r,t_1) &\le \frac{C}{r^{3/2}}\left( n^{1/2} + \sum_{m=1}^r m^2 \exp(-2m^2/r + Cm^{3/2}n^{-1/4} )  \right) \\
	&\le \frac{Cn^{1/2}}{r^{3/2}} \exp(C r_p^3/n)~.
	\end{split}
	\end{equation*}
	
	The desired asymptotic expression \eqref{eq:MM_asym}, in this case, takes the form 
	\begin{equation*}
	\begin{split}
	\mathcal{M}(2r,t_1) &= 2\phi(\alpha,\tau_1) + o(1) \\
	&= \frac{1}{\sqrt{\pi}}\left( \frac{\alpha}{2} \right)^{-3/2} + \int_0^\infty \frac{4\xi d\xi}{\sqrt{\pi\alpha}} e^{-\xi^2}\psi(2\sqrt{\alpha}\xi,\tau_1) + o(1)~.
	\end{split}
	\end{equation*} To arrive at this expression, we choose a large $R > 1$ (independent of $n$), and split the sum into four parts, \begin{equation}
	\begin{split}
	\mathcal{M}(2r,t_1) = \mathcal{I}_1 + \mathcal{I}_2 + \mathcal{I}_3 + \mathcal{I}_4~,
	\end{split}
	\end{equation} where $\mathcal{I}_1$ simply denotes the term $m = 0$ and $\mathcal{I}_{i}$ with $i = 2, 3, 4$ denote the sum restricted to the regions $0 < m < R^{-1}n^{1/6}$, $R^{-1}n^{1/6} \le m \le R n^{1/6}$, and $Rn^{1/6}<m \le r$, respectively. 
		
The de Moivre--Laplace approximation yields
	\[ \mathcal{I}_1 = \frac{1}{(2r+1)2^{2r}} \binom{2r+1}{r} \sqrt{n-t_1-2}
	= (1+o(1)) \frac{\sqrt{n}}{{\sqrt{\pi r^3}}} \to \frac{1}{\sqrt{\pi}} \left(\frac\alpha2\right)^{-3/2} \]
as $n \rightarrow \infty$. An additional application of the de Moivre--Laplace 
approximation combined with  \eqref{eq:asym_expression_perm} yields
\begin{equation*}
\mathcal{I}_2 \to \int_{1/R}^R \frac{4\xi}{\sqrt{\pi \alpha}} e^{-\xi^2} \psi(2\sqrt{\alpha}\xi,\tau_1) d\xi~;
	\end{equation*}
the exchange of limits is justified due to (\ref{eq:upper_bound}). 

Now we let $R \to \infty$. For sufficiently large $R$, by substituting in $r \sim 2\alpha n^{1/3}$, we obtain \begin{equation*}
	\begin{split}
	\mathcal{I}_4 \le \sum_{R \le m/n^{1/6}} \frac{C}{n^{1/6}} \left( \frac{m}{n^{1/6}} \right)^2 \exp\left(- c\left( \frac{m}{n^{1/6}} \right)^2  \right) \le C'\int_R^\infty x^2\exp(-x^2)dx
	\end{split}
	\end{equation*} which vanishes in the limit $R \rightarrow \infty$. The term $\mathcal{I}_2$ can be treated analogously. 
This finishes the proof. \qed

\subsection{Proof of Proposition~\ref{prop'} from Lemma~\ref{lem:MM}}\label{s:prop'}

Let us consider the `moreover' part of the case $k=1$. Assume that  $t/2n^{5/6} \rightarrow \tau$ and $r/2n^{1/3} \rightarrow \alpha$ as $n\rightarrow\infty$. Then 
\[\begin{split} \mathcal{M}^\sym(r,t) 
&= \sum_{m=0}^{n_1-1}  \frac{r}{2\sqrt{n_1}} \frac{1}{\sqrt{n_1-m}} (1 - m/n_1)^{\frac{r}{2}}  
\frac{\sqrt{n_1-m}}{n!} \tr \left(\frac{X_{n_1-m}}{2 \sqrt{n_1-m}}\right)^r\\
&= \sum_{m=0}^{n_1-1}  \frac{r}{2\sqrt{n_1}} \frac{1}{\sqrt{n_1-m}} (1 - m/n_1)^{\frac{r}{2}}  
\mathcal{M}(r,m)~.
\end{split}\]
Divide the sum into two parts $I$ and $I\!I$ corresponding to $m \leq n^{1/2}$ and $m > n^{1/2}$, respectively (here $1/2$ is chosen so that $1/3 < 1/2 < 5/6$). Then $I\!I$ 
tends to zero due to the bound (\ref{eq:MM_bound}). On the other hand,
\[ I - (1 + (-1)^r) \phi(\alpha, \tau)\, \frac{r}{2n_1} \, \int_0^\infty \exp \left\{ - \frac{mr}{2n_1} \right\} dm \to 0 \]
due to (\ref{eq:MM_asym}); the exchange of limits is justified due to (\ref{eq:MM_bound}). Hence
\[\mathcal{M}^\sym(r,t)  = I + I\!I =  (1 + (-1)^r) \phi(\alpha, \tau) + o(1)~,\]
as claimed. Extension to the case $k > 1$ is straightforward. \qed

\section{Concluding remarks}

\subsubsection{Functional limit theorem} It is plausible that the
convergence in Theorem~\ref{thm:main} could be upgraded to
the convergence in the space of random continuous functions. 
One possible approach to this problem would be to follow the
arguments in the proof of \cite[Theorem~3]{Sodin2015}.

\subsubsection{The setting of Borodin-Olshanski}\label{sub:bo}
Let us describe the original setting of \cite{BO}.  There, partitions are allowed to both decay and grow in forward time, at random time moments, with time-dependent rates determined by a given curve. Fix a parametrized curve $C = (u(t),v(t))_{t\in I}$ defined on some interval $I$ of $\mathbb{R}$. We assume that $u(t),v(t) > 0$ and $\dot{u}(t) \ge 0, \dot{v}(t) \le 0$ (that is, $C$ is directed southeast), and the parametrization is such that \begin{equation}\label{eq:t_para}
t = \frac{1}{2} (\ln u - \ln v) + \mathrm{const}~.
\end{equation}
Then consider the Poisson process in $\mathbb{R}^2_{>0}$ with constant density 1. Given a point configuration in  $\mathbb{R}^2_{>0}$ generated by the process, the partition $\Lambda(t)$ is defined by applying the Robinson-Schensted algorithm to the points lying inside the rectangle with vertices $(0,0), (0,v(t)), (u(t),0)$, and $(u(t),v(t))$ (see \cite{BO} for details). This defines a random trajectory $\{ \Lambda(t)\}_{t \in I} $, given a point configuration. It can be easily checked that whenever decay (or growth) happens to the trajectory $\Lambda(t)$, the probability of transitioning from a partition to another coincides with that in our discrete-time setting. 

To describe a limit transition, consider a family of curves $C_\theta = (u_\theta(t),v_\theta(t))$, and assume that there is some constant $T \in \mathbb{R}$ such that $u_\theta(T) v_\theta(T) = \theta $ for all $\theta$. That is, the average of the number of boxes that $\Lambda_\theta(T)$ has equals $\theta$. Introduce the variables \begin{equation*}
x^\theta_j(\tau) = \theta^{-1/6} \left( \Lambda(t(\tau))_j - 2\left(u_\theta(t(\tau))v_\theta(t(\tau)) \right)^{1/2} \right)~,\quad t(\tau) = T + \tau \theta^{-1/6}~,
\end{equation*}
and note that this is consistent with time-scaling from \eqref{eq:scaling} as we expect $\approx 2\tau \theta^{5/6}$ transitions to occur during the time interval $[T,T+ \tau \theta^{-1/6}]$, in view of parametrization \eqref{eq:t_para}. Then \cite[Theorem 4.4]{BO} states that the sequence $(x^\theta_1(\tau) \ge x^\theta_2(\tau) \ge\cdots)$ converges to the Airy$_2$ line ensemble, as $\theta \rightarrow \infty$. 

Therefore, our result corresponds to the case when $C_\theta$ is a family of vertical lines. 
Another special case with $C_\theta$ being the lines $\{ u+v =\mathrm{const} \}$ was proved earlier in \cite{PS}.

\subsubsection{A possible extension}\label{sub:conj} The following construction is motivated by \cite{Bor}. Let $\pi \in S_n$ be a random permutation.
To every subset $A \subset \{1, \cdots, n\}$,
associate a partial permutation $\pi_{n,A} = \pi_n|_A$. The Robinson--Schensted 
correspondence takes $\pi_{n,A}$ to a pair of Young tableaux of the same 
shape $\Lambda^n(A)$.

Now we rescale $\Lambda^n(\cdot)$, as follows. For any nice set 
$B \subset \mathbb{R}_+$ (a finite union of bounded intervals), let 
$A_n(B) = 2n^{5/6} B \cap \mathbb{Z}$, and let
\[ x_j^n(A) = n^{-1/6} \left( \Lambda^n_j(A_n(B)) - 2(n-|A_n(B)|)^{1/2}\right)~. \]
Denote by $X^n$ the  stochastic process formed by $x_j^n$. Is it true 
that  that $X^n$ converges to a limiting object, which is a stochastic process 
$X(B) = (x_j(B))_{j \geq 1}$ indexed by nice subsets $B \subset \mathbb{R}$?
  
\bigskip
\paragraph{Acknowledgment} We are grateful to Vadim Gorin, who first encouraged
us to study  \cite{Ok} and made helpful comments at all stages of our work;
to Alexei Borodin, for various remarks and suggestions and for help with references; 
and to Ohad Feldheim, Grigori Olshanski, and Dan Romik for the critique of the preliminary version which led to numerous improvements.

\bibliographystyle{alpha}
\bibliography{random}

\def\cprime{$'$}
\begin{thebibliography}{CSST10}

\bibitem[BDJ99]{BDJ1}
Jinho Baik, Percy Deift, and Kurt Johansson.
\newblock On the distribution of the length of the longest increasing
  subsequence of random permutations.
\newblock {\em J.~Amer.\ Math. Soc.}, 12(4):1119--1178, 1999.

\bibitem[BDJ00]{BDJ2}
Jinho Baik, Percy Deift, and Kurt Johansson.
\newblock On the distribution of the length of the second row of a {Y}oung
  diagram under {P}lancherel measure.
\newblock {\em Geom.\ Funct.\ Anal.}, 10(4):702--731, 2000.

\bibitem[BO06]{BO}
Alexei Borodin and Grigori Olshanski.
\newblock Stochastic dynamics related to {P}lancherel measure on partitions.
\newblock In {\em Representation theory, dynamical systems, and asymptotic
  combinatorics}, volume 217 of {\em Amer. Math. Soc. Transl. Ser. 2}, pages
  9--21. Amer. Math. Soc., Providence, RI, 2006.

\bibitem[BOO00]{BOO}
Alexei Borodin, Andrei Okounkov, and Grigori Olshanski.
\newblock Asymptotics of {P}lancherel measures for symmetric groups.
\newblock {\em J. Amer. Math. Soc.}, 13(3):481--515 (electronic), 2000.

\bibitem[Bor11]{Bor_det}
Alexei Borodin.
\newblock Determinantal point processes.
\newblock In {\em The {O}xford handbook of random matrix theory}, pages
  231--249. Oxford Univ. Press, Oxford, 2011.

\bibitem[Bor14]{Bor}
Alexei Borodin.
\newblock C{LT} for spectra of submatrices of {W}igner random matrices.
\newblock {\em Mosc. Math. J.}, 14(1):29--38, 170, 2014.

\bibitem[CH14]{CH}
Ivan Corwin and Alan Hammond.
\newblock Brownian {G}ibbs property for {A}iry line ensembles.
\newblock {\em Invent. Math.}, 195(2):441--508, 2014.

\bibitem[CSST10]{CST}
Tullio Ceccherini-Silberstein, Fabio Scarabotti, and Filippo Tolli.
\newblock {\em Representation theory of the symmetric groups}, volume 121 of
  {\em Cambridge Studies in Advanced Mathematics}.
\newblock Cambridge University Press, Cambridge, 2010.

\bibitem[FN11]{ForNag}
Peter~J. Forrester and Taro Nagao.
\newblock Determinantal correlations for classical projection processes.
\newblock {\em Journal of Statistical Mechanics: Theory and Experiment},
  2011(08):P08011, 2011.

\bibitem[FNH99]{FNH}
Peter~J. Forrester, Taro Nagao, and Graeme Honner.
\newblock Correlations for the orthogonal-unitary and symplectic-unitary
  transitions at the hard and soft edges.
\newblock {\em Nuclear Phys. B}, 553(3):601--643, 1999.

\bibitem[For93]{F}
Peter~J. Forrester.
\newblock The spectrum edge of random matrix ensembles.
\newblock {\em Nuclear Phys. B}, 402(3):709--728, 1993.

\bibitem[FS10]{FS}
Ohad~N. Feldheim and Sasha Sodin.
\newblock A universality result for the smallest eigenvalues of certain sample
  covariance matrices.
\newblock {\em Geom. Funct. Anal.}, 20(1):88--123, 2010.

\bibitem[JN06]{JohNor}
Kurt Johansson and Eric Nordenstam.
\newblock Eigenvalues of {GUE} minors.
\newblock {\em Electron. J. Probab.}, 11:no. 50, 1342--1371, 2006.

\bibitem[Joh01]{Jo}
Kurt Johansson.
\newblock Discrete orthogonal polynomial ensembles and the {P}lancherel
  measure.
\newblock {\em Ann. of Math. (2)}, 153(1):259--296, 2001.

\bibitem[Joh03]{Joh03}
Kurt Johansson.
\newblock Discrete polynuclear growth and determinantal processes.
\newblock {\em Comm. Math. Phys.}, 242(1-2):277--329, 2003.

\bibitem[Juc74]{Jucys}
Algimantas~Adolfas Jucys.
\newblock Symmetric polynomials and the center of the symmetric group ring.
\newblock {\em Rep. Mathematical Phys.}, 5(1):107--112, 1974.

\bibitem[Ker93]{Kerov}
Sergei~V. Kerov.
\newblock Transition probabilities of continual {Y}oung diagrams and the
  {M}arkov moment problem.
\newblock {\em Funktsional. Anal. i Prilozhen.}, 27(2):32--49, 96, 1993.

\bibitem[Ker03]{Ker_book}
Sergei~V. Kerov.
\newblock {\em Asymptotic representation theory of the symmetric group and its
  applications in analysis}, volume 219 of {\em Translations of Mathematical
  Monographs}.
\newblock American Mathematical Society, Providence, RI, 2003.
\newblock Translated from the Russian manuscript by N. V. Tsilevich, With a
  foreword by A. Vershik and comments by G. Olshanski.

\bibitem[LS77]{LS}
Benjamin~F. Logan and Lawrence~A. Shepp.
\newblock A variational problem for random {Y}oung tableaux.
\newblock {\em Advances in Math.}, 26(2):206--222, 1977.

\bibitem[LY14]{LeeYin}
Ji~Oon Lee and Jun Yin.
\newblock A necessary and sufficient condition for edge universality of
  {W}igner matrices.
\newblock {\em Duke Math. J.}, 163(1):117--173, 2014.

\bibitem[Mac94]{Macedo}
Ant\^onio Murilo~Santos Mac{\^e}do.
\newblock Universal parametric correlations at the soft edge of the spectrum of
  random matrix ensembles.
\newblock {\em EPL (Europhysics Letters)}, 26(9):641, 1994.

\bibitem[Oko00]{Ok}
Andrei Okounkov.
\newblock Random matrices and random permutations.
\newblock {\em Internat. Math. Res. Notices}, (20):1043--1095, 2000.

\bibitem[OR06]{OR}
Andrei Okounkov and Nicolai Reshetikhin.
\newblock The birth of a random matrix.
\newblock {\em Mosc. Math. J.}, 6(3):553--566, 588, 2006.

\bibitem[OV96]{OV}
Andrei Okounkov and Anatoly Vershik.
\newblock A new approach to representation theory of symmetric groups.
\newblock {\em Selecta Math. (N.S.)}, 2(4):581--605, 1996.

\bibitem[PS02]{PS}
Michael Pr{\"a}hofer and Herbert Spohn.
\newblock Scale invariance of the {PNG} droplet and the {A}iry process.
\newblock {\em J. Statist. Phys.}, 108(5-6):1071--1106, 2002.

\bibitem[Rom15]{Romik}
Dan Romik.
\newblock {\em The Surprising Mathematics of Longest Increasing Subsequences}.
\newblock Cambridge University Press, 2015.

\bibitem[Sny66]{Sn}
Martin~Avery Snyder.
\newblock {\em Chebyshev methods in numerical approximation}.
\newblock Prentice-Hall, Inc., Englewood Cliffs, N.J., 1966.

\bibitem[Sod07]{Sodin2007}
Sasha Sodin.
\newblock Random matrices, nonbacktracking walks, and orthogonal polynomials.
\newblock {\em J. Math. Phys.}, 48(12):123503, 21, 2007.

\bibitem[Sod14]{Sodin2014}
Sasha Sodin.
\newblock Several applications of the moment method in random matrix theory.
\newblock {\em ArXiv e-prints}, 2014.

\bibitem[Sod15]{Sodin2015}
Sasha Sodin.
\newblock A limit theorem at the spectral edge for corners of time-dependent
  {W}igner matrices.
\newblock {\em Int. Math. Res. Not. IMRN}, (17):7575--7607, 2015.

\bibitem[Sos99]{Sos}
Alexander Soshnikov.
\newblock Universality at the edge of the spectrum in {W}igner random matrices.
\newblock {\em Comm. Math. Phys.}, 207(3):697--733, 1999.

\bibitem[Sta16]{St}
Richard~P Stanley.
\newblock Smith normal form in combinatorics.
\newblock {\em ArXiv e-prints}, 2016.

\bibitem[TW94]{TW1}
Craig~A. Tracy and Harold Widom.
\newblock Level-spacing distributions and the {A}iry kernel.
\newblock {\em Comm. Math. Phys.}, 159(1):151--174, 1994.

\bibitem[VK77]{VK1}
Anatoly M.~[Vershik] Ver{\v{s}}ik and Sergei~V. Kerov.
\newblock Asymptotic behavior of the {P}lancherel measure of the symmetric
  group and the limit form of {Y}oung tableaux.
\newblock {\em Dokl. Akad. Nauk SSSR}, 233(6):1024--1027, 1977.

\bibitem[VK85]{VK2}
Anatoly~M. Vershik and Sergei~V. Kerov.
\newblock Asymptotic behavior of the maximum and generic dimensions of
  irreducible representations of the symmetric group.
\newblock {\em Funktsional. Anal. i Prilozhen.}, 19(1):25--36, 96, 1985.

\bibitem[Wig55]{Wi1}
Eugene~P. Wigner.
\newblock Characteristic vectors of bordered matrices with infinite dimensions.
\newblock {\em Ann. of Math. (2)}, 62:548--564, 1955.

\bibitem[Wig57]{Wi2}
Eugene~P. Wigner.
\newblock Characteristic vectors of bordered matrices with infinite dimensions.
  {II}.
\newblock {\em Ann. of Math. (2)}, 65:203--207, 1957.

\bibitem[You01]{Young}
A.~Young.
\newblock On quantitative substitutional analysis (second paper).
\newblock {\em Proceedings of the London Mathematical Society},
  s1-34(1):361--397, 1901.

\end{thebibliography}

\end{document}